\newtheorem{theorem}{Theorem}[section]
\newtheorem{lemma}[theorem]{Lemma}
\newtheorem{prop}[theorem]{Proposition}
\newtheorem{remark}[theorem]{Remark}
\newtheorem{example}[theorem]{Example}
\newenvironment{proof}{\noindent\text{\textbf{Proof.\:}}}{}
\def\qed{\hfill $\square$ \goodbreak}
\def\eps{\varepsilon}
\def\R{{\mathbb R}}
\def\haus{\mathcal{H}^{d-1}}
\def\F{\mathcal{F}}
\def\Ind{\raise 2pt\hbox{$\chi $}}
\newbox\boxint\newbox\boxtir\newdimen\longa\newdimen\longb
\def\intgm{
\setbox\boxint=\hbox{$\displaystyle\int$}
\setbox\boxtir=\hbox{$-$}
\longa=\wd\boxint\longb=\wd\boxtir
\advance\longa-\longb\divide\longa2
\advance\longb\longa
\box\boxint\hskip-\longb\box\boxtir}
\def\intmoy{\mathop{\intgm}\nolimits}
\def\moyg_#1{\intmoy_{\!\!\!#1}}
\def\intm{
\setbox\boxint=\hbox{$\int$}
\setbox\boxtir=\hbox{{--}}
\longa=\wd\boxint\longb=\wd\boxtir
\advance\longa-\longb\divide\longa2
\advance\longb\longa
\box\boxint\hskip-\longb\box\boxtir}
\def\moyp{\mathop{\intm}\nolimits}
\def\moy_#1{\ifmmode\ifinner{\moyp_{#1}}\else{\moyg_{#1}}\fi\fi}
\title{Regularity of the optimal shape for the first
eigenvalue of the Laplacian
with volume and inclusion constraints}
\author{Tanguy Brian\c{c}on\footnote{Lyc\'ee Agora, 92800 Puteaux , France,
briancon\_tanguy@yahoo.fr},
Jimmy Lamboley\footnote{ENS Cachan Bretagne, IRMAR, UEB, av Robert Schuman, 35170 Bruz, France,
jimmy.lamboley@bretagne.ens-cachan.fr}}
\begin{document}

\maketitle
\begin{abstract}
We consider the well-known following shape optimization problem:
$$\lambda_1(\Omega^*)=\min_{\stackrel{|\Omega|=a}
{\Omega\subset{ D}}} \lambda_1(\Omega),
$$ where $\lambda_1$ denotes the first eigenvalue of the Laplace operator with homogeneous Dirichlet boundary condition, and $D$ is an open bounded set (a box).
 It is well-known that the solution of this problem is the ball of volume $a$ if such a ball exists in the box $D$ (Faber-Krahn's theorem).\\
In this paper, we prove regularity properties of the boundary of the optimal shapes $\Omega^*$ in any case and in any dimension.
Full regularity is obtained in dimension 2.\\

{\it Keywords:\,} Shape optimization, eigenvalues of the Laplace operator, regularity of free boundaries.
\smallskip

\end{abstract}

\begin{section}{Introduction and main results}
Let $D$ be a bounded open subset of $\R^d$. For all open subset $\Omega$ of $D$, we denote by $\lambda_1(\Omega)$ the first eigenvalue of the Laplace
operator in $\Omega$, with homogeneous boundary conditions, and by $u_\Omega$ a normalized eigenfunction, that is
$$\left\{
\begin{array}{ccll}
-\Delta u_{\Omega}&=&\lambda_1(\Omega)u_{\Omega}&in\;\;\Omega,\\
u_{\Omega}&=&0&on\;\;\partial\Omega,\\
\int_\Omega u_\Omega^2&=&1.
\end{array}
\right.
$$
We are interested here in the regularity of the optimal shapes of the following shape optimization problem, where $a\in(0,|D|)$ ($|D|$ denotes
the Lebesgue measure of $D$):
\begin{equation}\label{probforme}
\left\{
\begin{array}{ll}
\displaystyle{\Omega^*\textrm{ open, }\Omega^*\subset D ,|\Omega^*|=a,}\\[2mm]
\displaystyle{\lambda_1(\Omega^*)=\min\{\lambda_1(\Omega);\;\Omega^*\textrm{ open, }\Omega\subset D,|\Omega|=a\}.}
\end{array}
\right.
\end{equation}
By a well-known theorem of Faber and Krahn, if there is a ball $B\subset D$ with $|B|=a$, then this ball is an optimal shape and it is unique, up to
translations (and up to sets of zero capacity).\\

Here we adress the question of existence of a \textbf{regular} optimal set in all cases.\\

Existence of a \textit{quasi-open} optimal set $\Omega^*$ may be deduced from a general existence result by G. Buttazzo and G. Dal Maso (see \cite{BdM})
for an extended version of (\ref{probforme}), where the variable sets $\Omega$ are not necessarily open. An optimal shape
$\Omega^*$  may not be more than a quasi-open set if $D$ is not connected (we reproduce in the appendix the example mentioned in \cite{BHP}).
On the other hand, it is proved in \cite{BHP} or \cite{H00} that such an open optimal set $\Omega^*$ always exists for (\ref{probforme}) and, if moreover
$D$ is connected, then all optimal shapes $\Omega^*$ are open. More precisely, it is proved in \cite{BHP} that,
for any $D$, $u_{\Omega^*}$ is locally
Lipschitz continuous in $D$. If moreover $D$ is connected, then $\Omega^*$ coincides with the support of $u_{\Omega^*}$ (and is therefore open).
Let us summarize this as follows (see also \cite{Wa}):
\begin{prop}\label{lip}
Assume $D$ is open and bounded. The problem (\ref{probforme}) has a solution $\Omega^*$, and $u_{\Omega^*}$ is
nonnegative and
 locally Lipschitz continuous in $D$. If $D$ is connected, $\Omega^*=\{x\in D, u_{\Omega^*}>0\}$.
\\Moreover, we have
\begin{equation}\label{deltaumes}
\Delta u_{\Omega^*} + \lambda_1(\Omega^*) u_{\Omega^*}\geq 0\;in\;D,
\end{equation}
which means that $\Delta u_{\Omega^*} + \lambda_1(\Omega^*) u_{\Omega^*}$ is a positive
Radon measure.
\end{prop}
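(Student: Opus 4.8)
The plan is to combine the existence theory for quasi-open minimizers of \cite{BdM} with a free-boundary regularity analysis of Alt--Caffarelli type, essentially as in \cite{BHP} (see also \cite{H00}, \cite{Wa}). By the general compactness and $\gamma$-lower-semicontinuity result of \cite{BdM}, $\Omega\mapsto\lambda_1(\Omega)$ admits a minimizer among quasi-open $\Omega\subset D$ with $|\Omega|\le a$; since $\lambda_1$ is nonincreasing under inclusion, any such minimizer $\Omega^*$ saturates the constraint, $|\Omega^*|=a$, and thus solves the relaxed version of (\ref{probforme}). A normalized first eigenfunction can be chosen nonnegative because $|u|$ has the same Rayleigh quotient as $u$; set $u=u_{\Omega^*}\ge0$, so that $\int_D|\nabla u|^2=\lambda_1(\Omega^*)$, $\int_D u^2=1$ and $-\Delta u=\lambda_1(\Omega^*)u$ in $\Omega^*$. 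The remaining work is to show that $u$ is locally Lipschitz, that $\Omega^*$ can be taken equal to $\{u>0\}$ when $D$ is connected, and to establish (\ref{deltaumes}).

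To obtain (\ref{deltaumes}), I would argue by comparison. Given $\varphi\in C^\infty_c(D)$ with $\varphi\ge0$ and $t>0$ small, the function $v_t:=(u-t\varphi)^+$ belongs to $H^1_0(\{v_t>0\})$ with $\{v_t>0\}\subset\{u>0\}\subset\Omega^*$; hence $\{v_t>0\}$ is admissible, and optimality of $\Omega^*$ with the Rayleigh characterization gives $\lambda_1(\Omega^*)\int_D v_t^2\le\int_D|\nabla v_t|^2$, an inequality which is an equality at $t=0$. Expanding both sides and using that $u^2\le t^2\varphi^2$ on $\{0<u\le t\varphi\}$ (so the contribution of this shrinking set is $O(t^2)$), one divides by $t$ and lets $t\to0^+$ to get $\int_D\nabla u\cdot\nabla\varphi\le\lambda_1(\Omega^*)\int_D u\varphi$ for every such $\varphi$, i.e.\ $\Delta u+\lambda_1(\Omega^*)u\ge0$ in $\mathcal D'(D)$, which is therefore a nonnegative Radon measure.

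The heart of the proof, and the main obstacle, is local Lipschitz continuity. First, standard elliptic estimates (De Giorgi iteration on $-\Delta u=\lambda_1(\Omega^*)u$ in $\Omega^*$ with zero boundary data, or a Faber--Krahn bound) give $u\in L^\infty(D)$, so (\ref{deltaumes}) yields $\Delta u\ge-C$ in $D$; equivalently $x\mapsto u(x)+\tfrac{C}{2d}|x|^2$ is subharmonic, so $u$ satisfies a sub-mean-value inequality. The decisive step is the linear growth estimate $\sup_{B_r(x_0)}u\le Cr$ at every $x_0\in\partial\{u>0\}\cap D$ for small $r$. Following \cite{BHP} and in the spirit of Alt--Caffarelli, one removes the volume constraint by penalization — there is $\Lambda>0$ such that, locally in $D$, $u$ minimizes an Alt--Caffarelli-type functional of the form $v\mapsto\int_D\bigl(|\nabla v|^2-\lambda_1(\Omega^*)v^2\bigr)+\Lambda\,|\{v\ne0\}|$ — and then compares $u$ on $B_r(x_0)$ with its harmonic (more precisely $\lambda_1(\Omega^*)u$-) replacement, balancing the Dirichlet-energy gain against the penalty $\Lambda\,|\{u=0\}\cap B_r(x_0)|$ and invoking the sub-mean-value inequality to deduce the linear bound. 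Interior elliptic/Harnack estimates then upgrade this to $|\nabla u|\le C$ on compact subsets of $D$, so $u$ is locally Lipschitz continuous in $D$.

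Finally, Lipschitz continuity makes $\omega:=\{u>0\}$ open, with $\omega\subset\Omega^*$ quasi-everywhere and $u\in H^1_0(\omega)$ realizing the Rayleigh quotient $\lambda_1(\Omega^*)$, so $\lambda_1(\omega)=\lambda_1(\Omega^*)$. Assume $D$ is connected. If $|\omega|<a$, then $|\omega|<|D|$ forces $\partial\omega\cap D\ne\emptyset$, and one may attach, through such a boundary point, a small connected open set $P\subset D$ to a connected component of $\omega$; by the strong maximum principle the eigenfunction of $\omega$ extended by zero is not an eigenfunction of the enlarged open set, so the first eigenvalue of the enlarged (still admissible, since $|P|$ is small) open set drops strictly below $\lambda_1(\Omega^*)$, contradicting optimality. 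Hence $|\omega|=a$, so $\omega=\Omega^*$ up to a set of zero capacity, and one may take $\Omega^*=\{x\in D:\,u_{\Omega^*}>0\}$.
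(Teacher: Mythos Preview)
The paper does not supply its own proof of this proposition; it is stated as a summary of results taken from \cite{BdM}, \cite{BHP}, \cite{H00} and \cite{Wa}. Your outline follows exactly that route---Buttazzo--Dal Maso for existence of a quasi-open minimizer, nonnegativity via $|u|$, the comparison with $(u-t\varphi)^+$ for \eqref{deltaumes}, and the Alt--Caffarelli penalization scheme of \cite{BHP} for local Lipschitz continuity---so your approach coincides with what the paper cites.

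One point of care. The penalized functional you write down for the Lipschitz step, namely local two-sided minimality of $v\mapsto\int_D\bigl(|\nabla v|^2-\lambda_a v^2\bigr)+\Lambda\,|\{v\ne0\}|$, is precisely the content of Theorem~\ref{theopena} of the present paper, which is established \emph{after} Proposition~\ref{lip} and is one of the paper's main contributions. What \cite{BHP} actually provides (cf.\ the Remark following Theorem~\ref{theopena}) is only the one-sided bound
\[
\int_D|\nabla u|^2\le\int_D|\nabla v|^2+\lambda_a\Bigl[1-\int_D v^2\Bigr]^{+}+\mu_+\bigl(|\Omega_v|-a\bigr)\qquad\text{for }|\Omega_v|\ge a,
\]
with the positive part on the $L^2$-term and outward volume perturbations only. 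This weaker form is exactly what is needed for the linear-growth estimate (the competitor is the harmonic replacement, which enlarges $\{u>0\}$), so your argument still goes through; but you should invoke this version rather than the full two-sided penalization, both for accuracy and to keep the logical order of the paper intact.
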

Here, we are interested in the regularity of $\partial\Omega^*$ itself, and we prove the following theorem:
\begin{theorem}\label{theoreg}
Assume $D$ is open, bounded and connected. Then any solution of (\ref{probforme}) satisfies:
\begin{enumerate}
\item $\Omega^*$ has locally finite perimeter in $D$ and
\begin{equation}\label{haus}
\haus((\partial\Omega^*\setminus\partial^*\Omega^*)\cap D)=0,
\end{equation}
where $\haus$ is the Hausdorff measure of dimension $d-1$, and
$\partial^*\Omega^*$ is the reduced boundary (in the sense of sets
with finite perimeter, see \cite{EG} or \cite{G}).
\item There exists $\Lambda>0$ such that
\[\Delta u_{\Omega^*}+\lambda_1(\Omega^*)u_{\Omega^*}
=\sqrt{\Lambda} \haus\lfloor\partial\Omega^*, \]
in the sense of distribution in $D$, where $\haus\lfloor\partial\Omega^*$ is
the restriction of the $(d-1)$-Hausdorff measure to $\partial\Omega^*$.
\item $\partial^*\Omega^*$ is an analytic hypersurface in $D$.
\item If $d=2$, then the whole boundary $\partial\Omega^*\cap D$ is
analytic.
\end{enumerate}
\end{theorem}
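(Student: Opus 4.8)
\textbf{Step 1 (removing the volume constraint).}
The equality $|\Omega^*|=a$ is not preserved by the competitors used in free boundary arguments, so the first task is to penalize it. I would prove that there is $\Lambda>0$ such that $\Omega^*$ minimizes
\[ \Omega\longmapsto\lambda_1(\Omega)+\Lambda\,\big||\Omega|-a\big| \]
among open $\Omega\subset D$. The proof is by contradiction and compactness: if for every $n$ the set $\Omega^*$ failed to minimize the functional with $\Lambda=n$, one gets competitors $\Omega_n\subset D$ with $\lambda_1(\Omega_n)+n\,||\Omega_n|-a|<\lambda_1(\Omega^*)$, so that $\lambda_1(\Omega_n)<\lambda_1(\Omega^*)$ and $||\Omega_n|-a|\to0$; adjusting the volume of $\Omega_n$ back to exactly $a$ (enlarging inside $D$ when $|\Omega_n|<a$, which can only lower $\lambda_1$, and removing a thin layer where $u_{\Omega_n}$ is small when $|\Omega_n|>a$, using the Lipschitz bound of Proposition~\ref{lip} to control the resulting change of $\lambda_1$ by $C||\Omega_n|-a|$) produces an admissible set with eigenvalue $<\lambda_1(\Omega^*)$, a contradiction. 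Writing $u:=u_{\Omega^*}$, this minimality combined with $\lambda_1(\{v\neq0\})\le\int|\nabla v|^2/\int v^2$ and a linearization of the Rayleigh quotient around $u$ (normalizing competitors to $\int v^2=1$, or equivalently absorbing quadratically small errors) shows that $u$ is, on every ball $B\subset\subset D$, a quasi-minimizer of the Alt--Caffarelli type functional
\[ J(v)=\int_D\big(|\nabla v|^2-\lambda_1(\Omega^*)\,v^2\big)+\Lambda\,\big|\{v\neq0\}\big|, \]
the bounded lower order term $-\lambda_1(\Omega^*)v^2$ (recall $u\in L^\infty$) playing no essential role in the free boundary analysis.

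\textbf{Step 2 (basic estimates and item~(1)).}
From the quasi-minimality of $u$ the classical comparison arguments give: the Lipschitz bound (already granted by Proposition~\ref{lip}), the \emph{non-degeneracy} $u(x)\ge c\,\mathrm{dist}(x,\partial\Omega^*)$ near $\partial\Omega^*\cap D$, and the two-sided density estimates $c\,r^d\le|\Omega^*\cap B_r(x)|$ and $c\,r^d\le|B_r(x)\setminus\Omega^*|$ for $x\in\partial\Omega^*\cap D$ and small $r$. Set $\mu:=\Delta u+\lambda_1(\Omega^*)u$, a nonnegative Radon measure by \eqref{deltaumes}; it vanishes on $\Omega^*$ (where the eigenvalue equation holds) and on the interior of the complement, so it is carried by $\partial\Omega^*$. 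The Lipschitz and non-degeneracy bounds then yield the Ahlfors estimate $c\,r^{d-1}\le\mu(B_r(x))\le C\,r^{d-1}$ on $\partial\Omega^*\cap D$; hence $\Omega^*$ has locally finite perimeter in $D$, and $\mu$, $\haus\lfloor\partial\Omega^*$ and the perimeter measure are mutually absolutely continuous with bounded densities. A blow-up at $\haus$-a.e. point of $\partial\Omega^*\cap D$ gives \eqref{haus}, which is item~(1).

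\textbf{Step 3 (items~(2) and (3)).}
At a point $x_0\in\partial^*\Omega^*$ I would blow up $u$: by Step~2 the rescalings converge, up to subsequences and locally uniformly, to a one-homogeneous global minimizer of $J$ (the lower order term disappearing in the limit), which must be the half-space solution $\sqrt{\Lambda}\,(x\cdot\nu_{x_0})^{+}$. This identifies the density of $\mu$ with respect to $\haus\lfloor\partial\Omega^*$ as the constant $\sqrt{\Lambda}$, so $\Delta u+\lambda_1(\Omega^*)u=\sqrt{\Lambda}\,\haus\lfloor\partial\Omega^*$, i.e. item~(2) (and $\Lambda>0$, since otherwise the constraint would be inactive and $\Omega^*=D$, impossible as $a<|D|$). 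It also shows that every point of $\partial^*\Omega^*$ is a flat free boundary point, so the Alt--Caffarelli / Caffarelli improvement-of-flatness theory makes $\partial^*\Omega^*$ locally a $C^{1,\alpha}$ hypersurface. On this $C^{1,\alpha}$ portion $u$ solves the overdetermined system $-\Delta u=\lambda_1(\Omega^*)u$ in $\Omega^*$, $u=0$ and $|\nabla u|=\sqrt{\Lambda}$ on $\partial\Omega^*$; Schauder bootstrap and the Kinderlehrer--Nirenberg hodograph argument (the datum $\lambda_1(\Omega^*)u$ being analytic up to the free boundary) upgrade $\partial^*\Omega^*$ to an analytic hypersurface, which is item~(3).

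\textbf{Step 4 (item~(4)) and the main difficulty.}
Let $d=2$. The singular set $\Sigma:=(\partial\Omega^*\setminus\partial^*\Omega^*)\cap D$ is relatively closed in $D$ and satisfies $\haus(\Sigma)=0$ by item~(1). If $x_0\in\Sigma$, a blow-up at $x_0$ produces a nonzero, one-homogeneous global minimizer of the one-phase problem in $\R^2$ whose free boundary passes through the origin and is not a line; but in dimension $2$ every such global minimizer is a half-space solution (there are no singular one-phase cones in $\R^2$), so $x_0\in\partial^*\Omega^*$, a contradiction. Hence $\Sigma=\emptyset$, $\partial\Omega^*\cap D=\partial^*\Omega^*$, and item~(4) follows from item~(3). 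The principal obstacle in the whole argument is Step~1 together with the non-degeneracy in Step~2: because $\lambda_1$ is a global functional of $\Omega$ tied to the $L^2$-normalization and the natural functional is not exactly the Alt--Caffarelli energy, turning the volume-constrained eigenvalue problem into a genuine local minimization of a free-boundary energy, and converting quantitative information about the global eigenvalue into pointwise lower bounds for $u$ near $\partial\Omega^*$, requires care; once the lower order term $-\lambda_1(\Omega^*)u^2$ is seen to be harmless, the remaining regularity theory follows the Alt--Caffarelli program rather closely.
\qed
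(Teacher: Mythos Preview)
Your overall strategy—penalize the volume constraint and then run the Alt--Caffarelli machinery—is the paper's strategy too, and once a correct two-sided penalization is available your Steps~2--3 essentially coincide with the paper's Section~3 (which also defers to \cite{AC,GSh} for the density/Ahlfors estimates and the blow-up). For item~(4) you appeal to the classification of one-phase cones in $\R^2$, while the paper instead adapts Theorem~6.6 of \cite{AC} to prove $\lim_{r\to0}\moy_{B_r(x_0)}(\Lambda-|\nabla u|^2)^+=0$; both routes are legitimate.

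The substantive gap is in Step~1, and it undermines the non-degeneracy in Step~2. Your penalized functional $\lambda_1(\Omega)+\Lambda\,\big||\Omega|-a\big|$ is \emph{vacuous on the side $|\Omega|<a$}: by monotonicity of $\lambda_1$ one already has $\lambda_1(\Omega^*)\le\lambda_1(\Omega)$ whenever $|\Omega|\le a$, so the inequality $\lambda_1(\Omega^*)\le\lambda_1(\Omega)+\Lambda(a-|\Omega|)$ holds for \emph{every} $\Lambda\ge0$ and yields no information. In particular it cannot produce a functional inequality $J(u)+\mu_-|\Omega_u|\le J(v)+\mu_-|\Omega_v|$ with $\mu_->0$ for competitors $v$ with $|\Omega_v|<a$; and it is precisely this ``inward'' inequality that drives non-degeneracy (one tests with $v$ obtained by truncating $u$ to zero on a small ball, which \emph{decreases} the support). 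Without it, the lower density bound for $\{u=0\}$, the Ahlfors lower bound on $\Delta u+\lambda_a u$, and the identification of blow-ups as genuine Alt--Caffarelli minimizers (hence half-planes, needed also in your Step~4) all fail. This is exactly the difficulty the paper isolates in Theorem~\ref{theopena}: it penalizes $J$ rather than $\lambda_1$, works only with local perturbations in a small ball $B_R$ (a global version is impossible since $J$ is unbounded below on $H^1_0(D)$; see the remark following Theorem~\ref{theopena}), introduces two separate constants $\mu_-(h)\le\Lambda\le\mu_+(h)$, and the hardest step—the ``fourth part'' of the proof—is showing $\Lambda>0$, equivalently $\mu_-(h)>0$ for small $h$. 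A smaller issue: in your contradiction argument you invoke the Lipschitz bound of Proposition~\ref{lip} for the competitors $u_{\Omega_n}$, but that bound is proved only for the minimizer $u_{\Omega^*}$, so the control of $\lambda_1$ when shrinking $\Omega_n$ needs a different justification.
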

We use the same strategy as in \cite{B} (where the regularity is studied for another
shape optimization problem). Theorem \ref{theoreg} essentially relies on the proof of the equivalence of (\ref{probforme})
with a penalized version for the constraint $|\Omega|=a$, as stated in Theorem \ref{theopena} below. Once we have this penalized version,
we can use techniques and results from \cite{AC} (see also \cite{GSh} and \cite{B}).
\begin{remark}\label{remCaf}
According to the results in \cite{AC}, the third point in Theorem \ref{theoreg} is a direct consequence of the second one
 which says that $u_{\Omega^*}$ is a ``\textup{weak solution}'' in the sense of \cite{AC}.
 To obtain the full regularity of the boundary for $d=2$, the fact that $u_{\Omega^*}$ is
a weak solution is not sufficient, and more information has to be deduced from the variational problem. The approach is essentially the same
as in Theorem 6.6 and Corollary 6.7 in \cite{AC}. The necessary adjustments are given at the end of this paper.
\end{remark}
\begin{remark}
According to the result of \cite{W2,W,CJK,DSJ}, it is likely that full regularity of the boundary may be extended to higher dimension ($d\leq 6$ ?),
and therefore that the estimate $(\ref{haus})$ can be improved.\\
But this needs quite more work and is under study.
\end{remark}

By a classical variational principle, we know that, for all $\Omega\subset D$ open,
\begin{equation}\label{defuomega}
\lambda_1(\Omega)=\int_{\Omega} |\nabla u_{\Omega}|^2
=\min\left\{\int_{\Omega}|\nabla u|^2, u\in H^1_0(\Omega), \int_{\Omega}u^2=1 \right\}.
\end{equation}
Here, $\lambda_1(\Omega^*)\leq\lambda_1(\Omega)$ for all open set $\Omega\subset D$ with $|\Omega|=a$.
Since $\left[\Omega\subset\widetilde{\Omega}\;\Rightarrow \lambda_1(\Omega)\geq\lambda_1(\widetilde{\Omega})\right]$, it follows that
$\lambda_1(\Omega^*)\leq\lambda_1(\Omega)$ for all open set $\Omega\subset D$ with $|\Omega|\leq a$. Coupled with (\ref{defuomega}), this leads to the
following variation property of $\Omega^*$ and $u_{\Omega^*}$ (see \cite{BHP} for more details), where
 we denote $u=u_{\Omega^*}, \lambda_a=\lambda_1(\Omega^*)$, and $\Omega_v=\{x\in D; v(x)\neq 0\}$:
\begin{equation}\label{probfonc}
\lambda_a=\int_{D}|\nabla u|^2=\min\left\{\int_{D}|\nabla v|^2;v\in H^1_0(D),
\int_D v^2=1, |\Omega_v|\le a \right\}.
\end{equation}
Let us rewrite this as follows. For $w\in H^1_0(D)$, we denote
$\displaystyle{J(w)=\int_D|\nabla w|^2-\lambda_a\int_D w^2}$. Then applying (\ref{probfonc}) with $v=w/(\int_D w^2)^{1/2}$, we obtain that $u$ is
a solution of the following optimization problem:
\begin{equation}\label{probfonc2}
J(u)\le J(w), \textrm{ for all }w\in H^1_0(D),\textrm{ with }|\Omega_w|\le a.
\end{equation}
One of the main ingredient in the proof of Theorem \ref{theoreg} is to improve the variational property (\ref{probfonc2}) in two directions, as stated
 in Theorem \ref{theopena} below. The approach is local.\\

Let $B_R$ be a ball
included in $D$ and centered on $\partial\Omega_u\cap D$. We define
\[\F=\{v\in H^1_0(D), u-v\in H^1_0(B_R)\}. \]
For $h>0$, we denote by $\mu_-(h)$ the biggest $\mu_-\geq 0$ such that,
\begin{equation}\label{mu}
\forall\; v \in\F\textrm{ such that } a-h\le |\Omega_v|\le a,\;
J(u)+\mu_-|\Omega_u|\le J(v)+\mu_-|\Omega_v|.
\end{equation}
We also define $\mu_+(h)$ as the smallest $\mu_+\geq 0$ such that,
\begin{equation}\label{lambda}
\forall\; v \in\F\textrm{ such that } a\le |\Omega_v|\le a+h,\;
J(u)+\mu_+|\Omega_u|\le J(v)+\mu_+|\Omega_v|.
\end{equation}
The following theorem is a main step in the proof of Theorem \ref{theoreg}:
\begin{theorem}\label{theopena}
Let $u,B_R$ and  $\F$ as above. Then for $R$ small enough
(depending only on $u,a$ and $D$), there exists $\Lambda>0$ and $h_0>0$ such that,
\[\forall\; h\in(0,h_0),\; 0<\mu_-(h)\le\Lambda\le\mu_+(h)<+\infty, \]
and, moreover,
\begin{equation}\label{limmu}
\lim_{h\to 0}\mu_+(h)=\lim_{h\to 0}\mu_-(h)=\Lambda.
\end{equation}
\end{theorem}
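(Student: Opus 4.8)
The plan is to establish Theorem~\ref{theopena} by a scaling/blow-up argument that turns the \emph{constrained} optimality~(\ref{probfonc2}) into the penalized inequalities~(\ref{mu}) and~(\ref{lambda}), combined with the Lipschitz regularity and the measure-theoretic information of Proposition~\ref{lip}. The core heuristic is that competitors $v\in\F$ with $|\Omega_v|$ slightly below $a$ can be ``filled up'' to exactly measure $a$ by enlarging $u$ on a tiny set far from $B_R$, at a controlled cost in $J$; symmetrically, competitors with $|\Omega_v|$ slightly above $a$ can be ``shaved down'' to measure $a$; the ratio (cost in $J$)/(gain in measure) of these operations is what produces the finite bounds $\mu_\pm(h)$ and, in the limit, the common value $\Lambda$.

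First I would prove the \emph{lower bound} $\mu_-(h)>0$ and the \emph{upper bound} $\mu_+(h)<+\infty$. For $\mu_+$: given $v\in\F$ with $a\le|\Omega_v|\le a+h$, one decreases its support back to measure $a$ by a truncation $v_t=(|v|-t)_+\mathrm{sign}(v)$ (or by intersecting with a suitable sublevel set of $u$ away from $B_R$), choosing $t=t(v)$ so that $|\Omega_{v_t}|=a$; since $u$ is Lipschitz and bounded below away from $\partial\Omega_u$ on compact subsets, the $J$-cost of this truncation is $O(t)$ while the measure removed is comparable to $t$ times a perimeter-type quantity that is bounded below (this is where local finite perimeter, or at density estimates à la \cite{AC}, enters) --- hence $J(u)-J(v_t)\le C(|\Omega_v|-a)$, and combining with~(\ref{probfonc2}) applied to $v_t$ gives~(\ref{lambda}) with an explicit finite $\mu_+$. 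For $\mu_-$: given $v$ with $a-h\le|\Omega_v|\le a$, one enlarges $v$ to measure exactly $a$ by adding a small bump supported in a fixed ball $B'\subset D$ disjoint from $B_R$ on which $u>0$; the key computation is that adding a set of measure $\delta$ near a point where $u\simeq c>0$ changes $J$ by at most $C\delta$ \emph{plus} a favorable first-order term, and since $v$ is admissible with measure $a$ one gets $J(u)+\mu_-(|\Omega_u|-|\Omega_v|)\le J(v)$ for a strictly positive $\mu_-$; strict positivity ultimately uses that $u$ is \emph{not} a subsolution with the wrong sign, i.e.\ the genuine constraint is active (if $\mu_-=0$ were forced, $u$ would minimize $J$ with the relaxed constraint $|\Omega_v|\le a$ only, contradicting that shrinking the support strictly decreases $J$ near the free boundary).

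Next I would prove $\mu_-(h)\le\Lambda\le\mu_+(h)$ for a single well-chosen constant $\Lambda$, and the convergence~(\ref{limmu}). The natural candidate is $\Lambda:=\lim_{h\to0^+}\mu_+(h)=\inf_{h>0}\mu_+(h)$, which exists because $h\mapsto\mu_+(h)$ is monotone (a larger admissible range of $|\Omega_v|$ can only force $\mu_+$ up), and similarly $\lim_{h\to0^+}\mu_-(h)=\sup_{h>0}\mu_-(h)$. To show these two limits coincide I would argue by contradiction: if $\lim\mu_-(h)=\mu_1<\mu_2=\lim\mu_+(h)$, pick $\mu\in(\mu_1,\mu_2)$; then for small $h$, $u$ fails to be a local minimizer of $J(\cdot)+\mu|\Omega_\cdot|$ among competitors with $|\Omega_v|<a$ \emph{and} among those with $|\Omega_v|>a$, so one can strictly decrease the penalized functional on \emph{both} sides --- by the usual ``two competitors glued together'' trick (shave on one part of $B_R$, fill on another) one builds a single competitor with $|\Omega_v|=a$ and $J(v)<J(u)$, contradicting~(\ref{probfonc2}). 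This gluing/contradiction step is the main obstacle: it requires the perturbations on the two sides to be spatially separated and small enough that their measure changes exactly cancel while the $J$-gains add up, and it relies delicately on the local Lipschitz bound and on the nondegeneracy of $u$ near $\partial\Omega_u$ (again a density estimate), which is precisely the technical heart borrowed from \cite{AC} and \cite{B}. Once $\mu_-(h)\uparrow\Lambda\downarrow\mu_+(h)$ is established with the sandwich $\mu_-(h)\le\Lambda\le\mu_+(h)$ (the middle inequalities following from the definitions of $\mu_\pm$ as extremal constants for which the one-sided inequalities hold), the proof is complete; I would finally record that $R$ must be taken small enough that the auxiliary ball $B'$ used for ``filling'' fits in $D\setminus B_R$ with $u$ bounded below on it, which is the only place the smallness of $R$ (depending on $u,a,D$) is used.
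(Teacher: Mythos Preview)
Your outline takes a route quite different from the paper's, and it contains a genuine circularity. Both your finiteness argument for $\mu_+$ and your positivity argument for $\mu_-$ invoke density/nondegeneracy/perimeter-type information at the free boundary (``this is where local finite perimeter, or density estimates \`a la \cite{AC}, enters''; ``nondegeneracy of $u$ near $\partial\Omega_u$''). In the paper's logical structure, those estimates are \emph{consequences} of Theorem~\ref{theopena}, not inputs to it: Lemma~\ref{lem} and the perimeter/density bounds of Section~3 all rely on the penalized inequalities~(\ref{mu}) and~(\ref{lambda}) already being in hand. At the point where Theorem~\ref{theopena} is proved, the only a priori information is Proposition~\ref{lip} (Lipschitz continuity and~(\ref{deltaumes})) and the constrained minimality~(\ref{probfonc2}); no coarea-type control on sublevel sets, no lower density bound, and no nondegeneracy are yet available. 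Without these, neither your truncation step (relating the $J$-cost of shaving an \emph{arbitrary} competitor $v\in\F$ to the measure removed) nor your bump-adding step yields a quantitative conclusion, and your two-sided gluing argument for equality of the limits has no footing.

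The paper sidesteps this by a completely different mechanism. The constant $\Lambda$ is not introduced as $\lim\mu_\pm$, but comes from an Euler--Lagrange identity obtained via inner variations $u_t(x)=u(x+t\Phi(x))$ (Lemma~\ref{euler}). Finiteness of $\mu_+(h)$ is proved by contradiction: one takes \emph{minimizers} $v_n$ of the penalized functional $J(\cdot)+\mu_n(|\Omega_{\cdot}|-a)^+$ on $\F$ with $\mu_n\to\infty$, derives Euler--Lagrange equations for the $v_n$ with multipliers $\Lambda_n\ge\mu_n$, and shows by strong $H^1$-convergence that $\Lambda_n\to\Lambda<\infty$. The limits~(\ref{limmu}) and the inequality $\mu_-(h)\le\Lambda\le\mu_+(h)$ are obtained by the same penalized-minimizer-plus-multiplier device, not by gluing competitors. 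Finally, $\Lambda>0$ is established last, again by contradiction: assuming $\Lambda=0$ forces the gradient of $u$ to vanish at the free boundary (Proposition~\ref{majgrad}), whence $-\Delta u=\lambda_a u$ across $\partial\Omega_u$ and a strong maximum principle contradiction. None of these steps uses density estimates; rather, they deliver the penalized inequalities that subsequently \emph{produce} those estimates.
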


\begin{remark} We can compare the existence of $\mu_+(h)$ with Theorem 2.9 in
\cite{BHP}. This theorem shows that there exists $\mu_+$ such that
\[\int_D |\nabla u|^2\le \int_D|\nabla v|^2+
\lambda_a\left[1-\int_D v^2\right]^+ + \mu_+(|\Omega_v|-a), \]
for $v\in H^1_0(D)$ and $|\Omega_v|\geq a$. The difference with \cite{BHP}
is that, in $(\ref{lambda})$, we have the term $\lambda_a[1-\int_D v^2]$ (not
only the positive part), but we allowed only perturbations in $B_R$.
We cannot expect to have something like $(\ref{lambda})$
for perturbations in all $D$ (because we may find $v$ with
$|\Omega_v|>a$ and $J(v)<0$, so $lim_{t\to+\infty} J(tv)=-\infty$).
\end{remark}

In the next section, we will prove Theorem \ref{theopena}. In the third
section, we will prove Theorem \ref{theoreg}. In the appendix, we discuss the case $D$ non-connected.
\end{section}

\begin{section}{Proof of Theorem \ref{theopena}}
In the next lemma, we give an Euler-Lagrange equation
for our problem. The proof follows the steps of the
Euler-Lagrange equation in $\cite{C}$.
\begin{lemma}[Euler-Lagrange equation]\label{euler}
Let $u$ be a solution of $(\ref{probfonc2})$. Then there exists
$\Lambda\geq 0$ such that, for all $\Phi\in C^{\infty}_0(D,\R^d)$,
\[\int_D 2(D\Phi\nabla u,\nabla u)-\int_D  |\nabla u|^2\nabla\cdot\Phi+\lambda_a
\int_D u^2 \nabla\cdot \Phi = \Lambda\int_{\Omega_u}\nabla\cdot \Phi\;. \]
\end{lemma}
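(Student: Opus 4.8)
\medskip
\noindent\emph{Proof plan.} The idea is a domain-variation (inner variation) argument, together with a device to turn the one-sided constraint $|\Omega_w|\le a$ of $(\ref{probfonc2})$ into a nonnegative Lagrange multiplier $\Lambda$, via the implicit function theorem.

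\smallskip
\noindent First I would fix $\Phi\in C^\infty_0(D,\R^d)$ and deform $u$ along the maps $\Phi_t:=\mathrm{Id}+t\Phi$, which for $|t|$ small are diffeomorphisms of $D$ equal to the identity near $\partial D$; hence $u_t:=u\circ\Phi_t^{-1}\in H^1_0(D)$ and $\Omega_{u_t}=\Phi_t(\Omega_u)$ up to a null set. Changing variables,
$$j(t):=J(u_t)=\int_D\big(|(D\Phi_t)^{-T}\nabla u|^2-\lambda_a u^2\big)|\det D\Phi_t|\,dx,\qquad m(t):=|\Omega_{u_t}|=\int_{\Omega_u}|\det D\Phi_t|\,dx.$$
The integrands are smooth in $t$ with bounds uniform on $D$, so $j$ and $m$ are $C^1$ near $0$; using $\partial_t\det(I+tD\Phi)|_{t=0}=\nabla\cdot\Phi$ and $\partial_t(I+tD\Phi)^{-1}|_{t=0}=-D\Phi$ one computes
$$j'(0)=\int_D\big(-2(D\Phi\nabla u,\nabla u)+|\nabla u|^2\nabla\cdot\Phi-\lambda_a u^2\nabla\cdot\Phi\big)dx,\qquad m'(0)=\int_{\Omega_u}\nabla\cdot\Phi\,dx.$$
Comparing with the statement, the lemma reduces to finding a constant $\Lambda\ge0$, independent of $\Phi$, with $j'(0)+\Lambda\,m'(0)=0$.

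\smallskip
\noindent To produce $\Lambda$ I would fix once and for all an auxiliary field $\Phi_0\in C^\infty_0(D,\R^d)$ with $c_0:=\int_{\Omega_u}\nabla\cdot\Phi_0<0$; this is possible because $\chi_{\Omega_u}$ is not a.e.\ constant ($0<a<|D|$ and $D$ connected), i.e.\ $\nabla\chi_{\Omega_u}\neq0$ in $\mathcal D'(D)$. For $(t,s)$ near $(0,0)$ set $\Psi_{t,s}:=\mathrm{Id}+t\Phi+s\Phi_0$, $v_{t,s}:=u\circ\Psi_{t,s}^{-1}\in H^1_0(D)$, $g(t,s):=J(v_{t,s})$ and $n(t,s):=|\Omega_{v_{t,s}}|=\int_{\Omega_u}|\det D\Psi_{t,s}|\,dx$; these are $C^1$ near the origin, with $g(\cdot,0)=j$, $n(\cdot,0)=m$, $n(0,0)=a$ and $\partial_s n(0,0)=c_0\neq0$. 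By the implicit function theorem there is a $C^1$ curve $t\mapsto s(t)$ with $s(0)=0$ and $n(t,s(t))\equiv a$, so $v_{t,s(t)}$ is admissible in $(\ref{probfonc2})$ and $t\mapsto g(t,s(t))$ is minimal at $t=0$, whence $0=\partial_t g(0,0)+\partial_s g(0,0)\,s'(0)$. Since $s'(0)=-m'(0)/c_0$, $\partial_t g(0,0)=j'(0)$ and $\partial_s g(0,0)=j_0'(0):=\frac{d}{ds}J(u\circ(\mathrm{Id}+s\Phi_0)^{-1})|_{s=0}$, this says $j'(0)+\Lambda\,m'(0)=0$ with $\Lambda:=-j_0'(0)/c_0$, which does not depend on $\Phi$. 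Finally $\Lambda\ge0$, because the curve $s\mapsto u\circ(\mathrm{Id}+s\Phi_0)^{-1}$ has mass $a+c_0 s+o(s)<a$ for $s>0$ small and hence is admissible, forcing $j_0'(0)=\lim_{s\to0^+}s^{-1}\big(J(u\circ(\mathrm{Id}+s\Phi_0)^{-1})-J(u)\big)\ge0$, while $c_0<0$. Substituting the formulas of the previous step and rearranging then yields exactly the asserted identity.

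\smallskip
\noindent The derivative computations are routine; the only real difficulty I anticipate is dealing with the \emph{inequality} constraint in $(\ref{probfonc2})$, which forbids a naive one-parameter differentiation and forces the introduction of the compensating direction $\Phi_0$ with $c_0<0$ together with the implicit function theorem — this is also what pins down the sign of $\Lambda$.
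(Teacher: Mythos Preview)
Your proof is correct and rests on the same domain-variation (inner variation) idea as the paper's. The one methodological difference is in how the multiplier is extracted from the inequality constraint. The paper proceeds by hand: it first shows the one-sided inequality $\int_D 2(D\Phi\nabla u,\nabla u)-|\nabla u|^2\nabla\cdot\Phi+\lambda_a u^2\nabla\cdot\Phi\ge0$ for every $\Phi$ with $\int_{\Omega_u}\nabla\cdot\Phi>0$ (your $j_0'(0)\ge0$), then obtains equality for fields with $\int_{\Omega_u}\nabla\cdot\Phi=0$ by perturbing with $\Phi+\eta\Phi_1$ and letting $\eta\to0$, and finally reduces the general case by subtracting the fixed reference field $\Phi_1\int_{\Omega_u}\nabla\cdot\Phi$; this is the elementary linear-algebra proof of the Lagrange multiplier rule. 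You instead introduce a two-parameter family and invoke the implicit function theorem to stay on the constraint $|\Omega|=a$, which is the standard ``smooth'' derivation of the same rule. Your route is perhaps conceptually tidier; the paper's is slightly more self-contained (no IFT needed, only first-order expansions). Either way the multiplier and its sign come out the same.
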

\begin{proof} We start by a general remark that will be useful in the
rest of the paper. If $v\in H^1_0(D)$ and if $\Phi\in C^{\infty}_0(D,\R^d)$,
we define $v_t(x)=v(x+t\Phi(x))$; therefore, for $t$ small enough, $v_t\in H^1_0(D)$.
A simple calculus gives (when $t$ goes to 0),
\[|\Omega_{v_t}|=|\Omega_v|-t\int_{\Omega_v}\nabla\cdot \Phi + o(t), \]
\[J(v_t)=J(v)+t\left(
\int_D 2(D\Phi\nabla v.\nabla v)-\int_D  |\nabla v|^2\nabla\cdot \Phi+\lambda_a
\int_D v^2 \nabla\cdot \Phi \right) + o(t). \]
Now we apply this with $v=u$ and $\Phi$ such that $\int_{\Omega_u}\nabla\cdot \Phi>0$.
Such a $\Phi$ exists, otherwise we would get, using that $D$ is connected,
$\Omega_u=D$ or $\emptyset$ a.e. We have $|\Omega_{u_t}|<|\Omega_u|$
for $t\geq 0$ small enough and, by minimality,
\begin{eqnarray*}
J(u) & \le  & J(u_t) \\
& = & J(u) +t\left(
\int_D 2(D\Phi\nabla u,\nabla u)-\int_D  |\nabla u|^2\nabla\cdot \Phi+\lambda_a
\int_D u^2 \nabla\cdot \Phi \right) + o(t),
\end{eqnarray*}
and so,
\begin{equation}\label{eulerp}
\int_D 2(D\Phi\nabla u,\nabla u)-\int_D  |\nabla u|^2\nabla\cdot \Phi+\lambda_a
\int_D u^2 \nabla\cdot \Phi \geq 0.
\end{equation}
Now, we take $\Phi$ with $\int_{\Omega_u}\nabla\cdot \Phi =0$.
Let $\Phi_1$ be such that  $\int_{\Omega_u}\nabla\cdot \Phi_1=1$. Writing
$(\ref{eulerp})$ with $\Phi+\eta\Phi_1$ and letting $\eta$ goes to $0$, we get
$(\ref{eulerp})$ with this $\Phi$ and, using $-\Phi$, we get $(\ref{eulerp})$
with an equality instead of the inequality. For a general $\Phi$, we use this
equality with $\Phi-\Phi_1(\int_{\Omega_u}\nabla\cdot \Phi)$ (we have
$\displaystyle{\int_{\Omega_u}\nabla\cdot \left(\Phi-\Phi_1\left(\int_{\Omega_u}\nabla\cdot  \Phi\right)\right)=0}$), and we get the
result with
\[\Lambda=\int_D 2(D\Phi_1\nabla u,\nabla u)-\int_D |\nabla u|^2\nabla\cdot \Phi_1 
+\lambda_a\int_D u^2 \nabla\cdot \Phi_1\geq 0,\]
using $(\ref{eulerp})$.\qed
\end{proof}
\begin{remark}We will have to prove that, in fact, $\Lambda>0$.
\end{remark}

Let us remind our notations: let $u$ be a solution of (\ref{probfonc2}), and let $B_R$ be a ball
included in $D$ and centered on $\partial\Omega_u\cap D$. We define
\[\F=\{v\in H^1_0(D), u-v\in H^1_0(B_R)\}. \]
Before proving Theorem \ref{theopena}, we give the following useful lemma:
\begin{lemma}
Let $u,B_R$ and $\mathcal{F}$ as above. Then there exists a constant $C$ such that,
for $R$ small enough,
\[\forall v\in\F,\;\; J(v)\geq \frac{1}{2}\int_{B_R}|\nabla v|^2 -C. \]
\end{lemma}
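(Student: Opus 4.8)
The plan is to split $J(v)$ along the decomposition $D=B_R\cup(D\setminus B_R)$ and to exploit that every $v\in\F$ agrees with $u$ outside $B_R$, so that only the part of the energy over $B_R$ is ``free''. Writing $v=u+\varphi$ with $\varphi\in H^1_0(B_R)$, the contribution over $D\setminus B_R$ is a fixed, bounded quantity (here one uses $\int_D u^2=1$ and $\int_D|\nabla u|^2=\lambda_a$), and the real task is to show that over $B_R$ the quadratic form $\int_{B_R}|\nabla v|^2-\lambda_a\int_{B_R}v^2$ still controls $\tfrac12\int_{B_R}|\nabla v|^2$ up to an additive constant, once $R$ is small.

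Concretely, I would first record
\[J(v)=\int_{B_R}|\nabla v|^2-\lambda_a\int_{B_R}v^2+\Big(\int_{D\setminus B_R}|\nabla u|^2-\lambda_a\int_{D\setminus B_R}u^2\Big),\]
and note that the parenthesized term is $\geq-\lambda_a\int_D u^2=-\lambda_a$. Then, using $v=u+\varphi$ with $\varphi\in H^1_0(B_R)$, I would apply the scaled Poincar\'e inequality on a ball, $\int_{B_R}\varphi^2\le c_d R^2\int_{B_R}|\nabla\varphi|^2$ (where $c_d$ depends only on the dimension), together with the elementary bounds $|\nabla\varphi|^2\le 2|\nabla v|^2+2|\nabla u|^2$ and $v^2\le 2u^2+2\varphi^2$, to obtain
\[\lambda_a\int_{B_R}v^2\le 4\lambda_a c_d R^2\int_{B_R}|\nabla v|^2+2\lambda_a\int_{B_R}u^2+4\lambda_a c_d R^2\int_{B_R}|\nabla u|^2.\]
Choosing $R$ small enough that $4\lambda_a c_d R^2\le\tfrac12$ lets the first term on the right be absorbed, while the remaining two are bounded by $2\lambda_a+4\lambda_a^2 c_d R^2$ since $\int_{B_R}u^2\le 1$ and $\int_{B_R}|\nabla u|^2\le\lambda_a$. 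Collecting these estimates yields $J(v)\geq\tfrac12\int_{B_R}|\nabla v|^2-C$ with $C$ depending only on $\lambda_a=\lambda_1(\Omega^*)$, hence on $u$, $a$ and $D$.

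The only delicate point — the ``main obstacle'', such as it is — is that $v$ itself need not belong to $H^1_0(B_R)$: only the difference $\varphi=v-u$ does, so the Poincar\'e inequality must be applied to $\varphi$ and the cross terms between $u$ and $\varphi$ handled via Young's inequality, which is precisely where the smallness of $R$ is used. The rest is a routine computation.
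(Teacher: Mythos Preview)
Your proof is correct and is essentially the same as the paper's: the paper also writes $\varphi=v-u\in H^1_0(B_R)$, applies the Poincar\'e inequality on $B_R$ (phrased there as $\|u-v\|_{L^2(B_R)}^2\le\|\nabla(u-v)\|_{L^2(B_R)}^2/\lambda_1(B_R)$ with $\lambda_1(B_R)=\lambda_1(B_1)/R^2$), and then uses the same $2$--$2$ splitting via Young's inequality to bound $\|v\|_{L^2(B_R)}^2$ by $\tfrac{4}{\lambda_1(B_R)}\|\nabla v\|_{L^2(B_R)}^2$ plus a fixed constant depending on $u$. Your smallness condition $4\lambda_a c_d R^2\le\tfrac12$ is exactly the paper's condition $4\lambda_a/\lambda_1(B_R)\le\tfrac12$, since $c_d R^2=1/\lambda_1(B_R)$.
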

\begin{proof} We know that $\lambda_1(B_R)=\lambda_1(B_1)/(R^2)$ (we just
use the change of variable $x\to x/R$). If $R$ is small enough we have:
\begin{equation}\label{choixr}
\lambda_1(B_R)\geq 1,\ \frac{4\lambda_a}{\lambda_1(B_R)}\le 1/2.
\end{equation}
Let $v\in\F$; so $u-v\in H^1_0(B_R)$, and using the variational formulation of
$\lambda_1(B_R)$, we get
\[\|u-v\|^2_{L^2(B_R)}\le\frac{\|\nabla(u-v)\|^2_{L^2(B_R)}}{\lambda_1(B_R)}. \]
We deduce that,
\begin{eqnarray*}
\|v\|^2_{L^2(B_R)} &\le &
2\frac{\|\nabla (u-v)\|^2_{L^2(B_R)}}{\lambda_1(B_R)}+2\|u\|^2_{L^2(B_R)} \\
& \le & 4\frac{\|\nabla v\|^2_{L^2(B_R)}}{\lambda_1(B_R)} + \frac{C}{\lambda_a},
\end{eqnarray*}
(we use $(\ref{choixr})$) where $C$ depends only on the $L^2$ norms of $u$
and his gradient. Now we have
\[J(v) \geq \int_D |\nabla v|^2-
\lambda_a\left(4\frac{\|\nabla v\|^2_{L^2(B_R)}}{\lambda_1(B_R)} + \frac{C}{\lambda_a}\right), \]
and we get the result using $(\ref{choixr})$.\qed
\end{proof}
\begin{remark}\label{jbound} This lemma is interesting for two reasons.
The first one is that $J$ is bounded from below on $\F$.
The second one is that, if $v_n\in\F$ is a sequence
such that $J(v_n)$ is bounded,
then $\|\nabla v_n\|_{L^2(B_R)}$ is also bounded.
Since $v_n=u$ outside $B_R$ we deduce that $v_n$ is bounded in $H^1_0(D)$
(and so weakly converges up to a sub-sequence...).
\end{remark}
\textbf{Proof of Theorem \ref{theopena}: }We divide our proof into four parts. Let $\Lambda\geq0$ be as in
Lemma \ref{euler}.\\
\textbf{ First part: $\Lambda\le\mu_+(h)<+\infty$.}

We start the proof by showing that $\mu_+(h)$ is finite. Since $B_R$
is centered on the boundary on $\partial\Omega_u$, we first show:
\[0<|\Omega_u\cap B_R|<|B_R|. \]
The first inequality comes from the fact that $\Omega_u$ is open. The second one comes from the following lemma:
\begin{lemma}\label{mes}
Let $\omega$ be an open subset of $D$, and let $u$ be a solution of $(\ref{probfonc2})$. If $|\Omega_u\cap \omega|=|\omega|$, then
$$-\Delta u = \lambda_a u\;\;in\;\;\omega,$$
and therefore $\omega\subset\Omega_u$.
\end{lemma}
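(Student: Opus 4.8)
The plan is to exploit the minimality of $u$ in (\ref{probfonc2}) against perturbations supported inside $\omega$. The point is that such perturbations are admissible: enlarging the positivity set only inside a region where $u$ is a.e. nonzero costs no extra measure.

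\medskip
\textbf{Step 1 (admissible perturbations).} The hypothesis $|\Omega_u\cap\omega|=|\omega|$ is equivalent to $|\omega\setminus\Omega_u|=0$. Hence, for every $\varphi\in C^\infty_0(\omega)$ and every $t\in\R$, the function $u+t\varphi\in H^1_0(D)$ satisfies $\Omega_{u+t\varphi}\subseteq\Omega_u\cup\mathrm{supp}\,\varphi$, so that
\[|\Omega_{u+t\varphi}|\le|\Omega_u|+|\mathrm{supp}\,\varphi\setminus\Omega_u|\le|\Omega_u|+|\omega\setminus\Omega_u|=|\Omega_u|\le a.\]
Thus $u+t\varphi$ is a competitor in (\ref{probfonc2}) for all $t$.

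\medskip
\textbf{Step 2 (Euler--Lagrange in $\omega$).} By (\ref{probfonc2}), the map $t\mapsto J(u+t\varphi)$, which is a quadratic polynomial in $t$, attains its minimum at $t=0$; differentiating at $t=0$ gives $\int_D\nabla u\cdot\nabla\varphi=\lambda_a\int_D u\varphi$ for every $\varphi\in C^\infty_0(\omega)$. That is, $-\Delta u=\lambda_a u$ in $\mathcal{D}'(\omega)$, which is the first assertion; by elliptic regularity $u$ is (real analytic) in $\omega$.

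\medskip
\textbf{Step 3 (positivity).} Since $u\ge0$ in $D$ by Proposition \ref{lip}, the equation yields $-\Delta u=\lambda_a u\ge0$ in $\omega$, so $u$ is a nonnegative superharmonic function in $\omega$. On each connected component $\omega'$ of $\omega$, the strong minimum principle forces either $u\equiv0$ on $\omega'$ or $u>0$ everywhere on $\omega'$. The first alternative is impossible: $|\omega'\setminus\Omega_u|\le|\omega\setminus\Omega_u|=0$ together with $|\omega'|>0$ would give $|\Omega_u\cap\omega'|=|\omega'|>0$, contradicting $u\equiv0$ on $\omega'$. Hence $u>0$ on every component of $\omega$, i.e. $\omega\subset\Omega_u$.

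\medskip
The only genuinely delicate point is the admissibility bound in Step 1; once it is in hand, the argument reduces to writing the Euler--Lagrange equation of an \emph{unconstrained} quadratic minimization on $\omega$ and invoking the classical minimum principle, which applies because the distributional solution $u$ is smooth in $\omega$.
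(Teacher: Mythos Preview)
Your proof is correct. The route differs from the paper's in the choice of competitor. The paper builds a single test function $v$ solving $-\Delta v=\lambda_a u$ in $\omega$ with $v=u$ on $\partial\omega$; since $u>0$ a.e.\ in $\omega$, the strict maximum principle gives $v>0$ in $\omega$, hence $|\Omega_v|=|\Omega_u|$ and $v$ is admissible. Then $J(u)\le J(v)$, combined with the identity obtained from $-\Delta v=\lambda_a u$, yields $\int_\omega|\nabla(u-v)|^2+\lambda_a\int_\omega(u-v)^2\le 0$, forcing $u=v>0$ in $\omega$. You instead use the whole family $u+t\varphi$, $\varphi\in C^\infty_0(\omega)$, observe (as you do in Step~1) that these are all admissible because $|\omega\setminus\Omega_u|=0$, and read off the Euler--Lagrange equation from the first variation; positivity then follows from the strong minimum principle applied to $u$ itself. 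Both arguments rely on $u\ge0$ (Proposition~\ref{lip}) and on a maximum principle; the paper's ``replacement'' trick packages the PDE and positivity into the single conclusion $u=v$, whereas your first-variation argument is more elementary and makes explicit that the volume constraint is inactive for perturbations supported in $\omega$.
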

\textbf{Proof of Lemma \ref{mes}.}
Since $u>0$ a.e. on $\omega$, we define $v\in H^1_0(D)$ by
$v=u$ outside $\omega$ and  $-\Delta v=\lambda_a u$ in $\omega$.
From the strict maximum principle, we get $v>0$ on $\omega$ and $|\Omega_v|=|\Omega_u|$.
By minimality ($J(u)\le J(v)$) we have,
\begin{eqnarray*}
\int_{\omega}(\nabla u - \nabla v).(\nabla u -\nabla v + 2 \nabla v)
- \lambda_a\int_{\omega}(u-v)(u+v) & \le & 0 \\
\int_{\omega}|\nabla u - \nabla v|^2+
\lambda_a\int_{\omega}(u-v)(2u-u-v) & \le & 0,
\end{eqnarray*}
(we use that $u-v\in H^1_0(\omega)$ and $-\Delta v=\lambda_a u$ in $\omega$).
We get that $u=v$ a.e. in $\omega$ and by continuity $u=v>0$ everywhere
in $\omega$.
\qed
\bigskip
If $|\Omega_u\cap B_R|=|B_R|$, applying this lemma to $\omega=B_R$, we would get $\Omega_u\cap B_R=B_R$,
which is impossible since $B_R$ is centered on $\partial\Omega_u$. If $R$ is small enough we can also suppose,
\[0<|\Omega_u\setminus B_R|<|D\setminus B_R|. \]
For the first inequality, we need that $|B_R|<a$, and for the second one
we need $a<|D|-|B_R|$.

Let $h>0$ be such that $h<|B_R|-|\Omega_u\cap B_R|$ (and so, if
$v\in\F$ with $|\Omega_v|\le a+h$, then $|\Omega_v\cap B_R|<|B_R|$).
Let $(\mu_n)$
an increasing sequence to $+\infty$. There exists $v_n\in\F$
such that $|\Omega_{v_n}|\leq a+h$ and,
\begin{equation}\label{vnla}
J(v_n)+\mu_n(|\Omega_{v_n}|-a)^+= \min_{\ v\in\F, |\Omega_{v}|\le a+h} \left\{J(v)+\mu_n(|\Omega_v|-a)^+\right\}.
\end{equation}
For this we use remark \ref{jbound}, and so the functional
$J(v)+\mu_n(|\Omega_v|-a)^+$ is bounded by below for
$v\in\F$. Moreover, a minimizing sequence for this functional
is bounded in $H^1_0(D)$ and so weakly converges in $H^1_0(D)$,
strongly in $L^2(D)$ and almost everywhere (up to a sub-sequence)
to some $v_n$. Using the lower semi-continuity of
$v\to \int_D |\nabla v|^2$ for the weak convergence, the strong convergence
in $L^2(D)$ and the lower semi-continuity of $v\to |\Omega_v|$ for
the convergence almost everywhere we see that $v_n$ is such that
$(\ref{vnla})$ is true.

If $|\Omega_{v_n}|\le a$ then $(\ref{lambda})$ is true
with $\mu_n$, so we will suppose to the contrary that $|\Omega_{v_n}|>a$ for all $n$. \\

\textbf{Step 1: Euler-Lagrange equation for $v_n$.}
If we set $b_n=|\Omega_{v_n}|$, then  $v_n$ is also solution of
\[J(v_n) =\min_{v\in\F, |\Omega_v|\leq b_n} J(v). \]
With the same proof as in lemma \ref{euler}, we can write an
Euler-Lagrange equation for $v_n$ in $B_R$. That is, there exists
$\Lambda_n\geq 0$ such that, for $\Phi\in C^{\infty}_0(B_R,\R^d)$,
\begin{equation}\label{eulern}
\int_D 2(D\Phi\nabla v_n.\nabla v_n)-\int_D  |\nabla v_n|^2 \nabla\cdot \Phi+\lambda_a
\int_D v_n^2 \nabla\cdot \Phi = \Lambda_n\int_{\Omega_{v_n}}\nabla\cdot \Phi.
\end{equation}

\textbf{Step 2: $\Lambda_n\geq \mu_n$.} There exists $\Phi\in C^{\infty}_0(B_R)$ such that $\int_{\Omega_{v_n}} \nabla\cdot  \Phi=1$. Let $v_n^t(x)=v_n(x+t\Phi(x))$.
We have $v_n^t\in\mathcal{F}$ for $t\geq 0$ small enough,
and using derivation results recalled in the proof of lemma \ref{euler}
and $|\Omega_{v_n}|>a$, we get
\[a<|\Omega_{v^t_n}|=|\Omega_{v_n}|-t+o(t)\le a+h, \]
\[J(v_n^t)=J(v_n)+ t \Lambda_n  + o(t). \]
Now we use $(\ref{vnla})$ with $v=v_n^t$ in order to get,
\[J(v_n)+\mu_n(|\Omega_{v_n}|-a)\le J(v_n)+ t \Lambda_n  + o(t)
+\mu_n(|\Omega_{v_n}|-t-a), \]
and dividing by $t>0$ and letting $t$ goes to 0, we finally get
$\Lambda_n\geq \mu_n$. \\

\textbf{Step 3: $v_n$ strongly converges to some $v$.} Using (\ref{vnla}) with $v=u$, we get
\begin{equation}\label{step3u}
J(v_n)+\mu_n(|\Omega_{v_n}|-a)\le J(u)
\end{equation}
and so, using Remark \ref{jbound},
we can deduce that $v_n$ weakly converge in $H^1_0$ (up to a sub-sequence)
to some $v\in\F$ with $|\Omega_v|\le a+h$. We also have the strong
convergence in $L^2(D)$ and the convergence almost everywhere. Since $J$
is bounded from below on $\F$, we see from $(\ref{step3u})$
that $\mu_n(|\Omega_{v_n}|-a)$ is bounded and we get
$\lim_{n\to\infty}|\Omega_{v_n}|=a$, and so $|\Omega_v|\le a$.
From $J(v_n)\le J(u)$, we get $J(v)\le\liminf J(v_n)\le J(u)$
and so $v$ is a solution of (\ref{probfonc2}). Finally we can write, using (\ref{vnla}), that
$J(v_n)\le J(v)$ and we get, using the strong convergence of $v_n$ in $L^2$,
\[\limsup_{n\to\infty}\int_{D}|\nabla v_n|^2\le \int_{D}|\nabla v|^2. \]
We also have, with weak convergence in $H^1_0(D)$ that
\[\int_{D}|\nabla v|^2\le\liminf_{n\to\infty}\int_{D}|\nabla v_n|^2. \]
We deduce that $\lim_{n\to\infty}\|\nabla v_n\|_{L^2(D)}=
\|\nabla v\|_{L^2(D)}$. With the weak-convergence, this gives the strong
convergence of $v_n$ to $v$ in $H^1_0(D)$. \\

\textbf{Step 4: $\lim\Lambda_n=\Lambda$.} We see that $v$ is a solution of $(\ref{probfonc2})$,
so we can apply Lemma \ref{euler} to get that there exists a
$\Lambda_v$ such that
\[\forall\;\Phi\in C^{\infty}_0(D,\R^d),\;\int_D 2(D\Phi\nabla v.\nabla v)-\int_D  |\nabla v|^2\nabla\cdot \Phi+\lambda_a
\int_D v^2 \nabla\cdot  \Phi = \Lambda_v\int_{\Omega_v}\nabla\cdot \Phi. \]
We have $u=v$ outside $B_R$ so, using this
equation and the Euler-Lagrange equation for $u$ we see that
$\Lambda_v=\Lambda$. Now, we write the Euler-Lagrange for $v_n$
and $\Phi\in C^{\infty}_0(D,\R^d)$ such that $\int_{\Omega_v} \nabla\cdot \Phi\neq 0$,
\[\int_D 2(D\Phi\nabla v_n.\nabla v_n)-\int_D  |\nabla v_n|^2\nabla\cdot \Phi+\lambda_a
\int_D v_n^2 \nabla\cdot  \Phi = \Lambda_n\int_{\Omega_{v_n}}\nabla\cdot \Phi, \]
and, using the strong convergence of $v_n$ to $v$, we get that
\begin{eqnarray*}
\lim_{n\to\infty}\Lambda_n & = &
\lim_{n\to\infty} \frac{\int_D 2(D\Phi\nabla v_n.\nabla v_n)-
\int_D |\nabla v_n|^2\nabla\cdot \Phi +\lambda_a
\int_D v_n^2 \nabla\cdot  \Phi}{\int_{\Omega_{v_n}}\nabla\cdot \Phi} \\
& = &
\frac{\int_D 2(D\Phi\nabla v.\nabla v)-\int_D  |\nabla v|^2\nabla\cdot \Phi+\lambda_a
\int_D v^2 \nabla\cdot  \Phi}{\int_{\Omega_{v}}\nabla\cdot \Phi} \\
& =& \Lambda.
\end{eqnarray*}

Since $\lim\mu_n=+\infty$ we get the contradiction from Steps 2 and 4, and so $\mu_+(h)$ is finite.

To conclude this first part, we now have to see that $\Lambda\le\mu_+(h)$. Let $\Phi\in C^{\infty}_0$  be such
that $\int_{\Omega_u}\nabla\cdot \Phi=-1$, and let $u_t(x)=u(x+t\Phi(x))$.
Using the calculus in the proof of Lemma \ref{euler} we have,
for $t\geq 0$ small enough,
\[a\le |\Omega_{u_t}|=a+t+o(t)\le a+h, \]
\[J(u_t)=J(u)-t\Lambda +o(t). \]
Now, using $(\ref{lambda})$, we have
\[J(u)+\mu_+(h)a\le J(u)-t\Lambda +\mu_+(h)(a+t)+o(t), \]
and we get $\Lambda\le\mu_+(h)$. \\
~\\
\textbf{Second part: $\lim\mu_+(h)=\Lambda$. }

We first see that $\mu_+(h)>0$ for $h>0$. Indeed, if $\mu_+(h)=0$ we write
\[\textrm{for every }\varphi\in C^{\infty}_0(B_R)\textrm{ with }|\{\varphi\neq 0\}|<h,\; J(u)\le J(u+t\varphi), \]
so
\[-\Delta u=\lambda_a u \textrm{ in } B_R, \]
which contradicts $0< |\Omega_u\cap B_R|<|B_R|.$

Let $\eps>0$ and $h_n>0$ a decreasing sequence tending to 0.
Because $h\to\mu_+(h)$ is non-increasing, we just have to
see that $\lim\mu_+(h_n)\le\Lambda+\eps$ for a sub-sequence of $h_n$.
If $\Lambda>0$, let $\eps\in]0,\Lambda[$ and
$0<\alpha_n:=\mu_+(h_n)-\eps<\mu_+(h_n)$; if $\Lambda=0$, let
$0<\alpha_n=\mu_+(h_n)/2<\mu_+(h_n)$.
There exists $v_n$ such that
\begin{equation*}
J(v_n)+\alpha_n(|\Omega_{v_n}|-a)^+=\min_{v\in\F, |\Omega_v|\le a+h_n}\left\{J(v)+\alpha_n(|\Omega_{v}|-a)^+\right\}.
\end{equation*}
Since $\alpha_n< \mu_+(h_n)$ we see that $|\Omega_{v_n}|>a$
(otherwise we write $J(u)\le J(v_n)+\alpha_n(|\Omega_{v_n}|-a)^+$).
We now have 4 steps that are very similar to the 4 steps used in the previous part to show
that $\mu_+(h_n)$ is finite. \\

\textbf{Step 1: Euler-Lagrange equation for $v_n$.}
If $v\in\F$ is such that $|\Omega_v|\le|\Omega_{v_n}|$, we have
$J(v_n)\le J(v)$. Then, as in Lemma \ref{euler} we can
write the Euler-Lagrange equation $(\ref{eulern})$ for $v_n$
in $B_R$ for some $\Lambda_n$. \\

\textbf{Step 2: $\Lambda_n\geq \alpha_n$.} Since $|\Omega_{v_n}|>a$ the proof
is the same as step 2 in the first part, with $\alpha_n$ instead of $\mu_n$. \\

\textbf{Step 3: $v_n$ strongly converge to some $v$.} As in step 3 above, we just
write,
\[J(v_n)+\alpha_n(|\Omega_{v_n}|-a)^+\le J(u),\]
to get (up to a sub-sequence) that $v_n$ weakly converges
in $H^1_0(D)$, strongly in $L^2(D)$ and almost-everywhere to $v\in\F$.
We have $a<|\Omega_{v_n}|\le a+h_n$ and so $\lim_{n\to\infty}|\Omega_{v_n}|=a$.
As in step 3 above, we deduce that $v$ is a solution of $(\ref{probfonc2})$, and
using
\[J(v_n)+\alpha_n(|\Omega_{v_n}|-a)\le J(v), \]
we get the strong convergence in $H^1_0(D)$. \\

\textbf{Step 4: $\lim\Lambda_n=\Lambda$.}
The proof is the same as in step 4 of the first part of the proof. We write the Euler-Lagrange
equation for $v$ in $D$ and use $u=v$ outside $B_R$. We get that
$\lim\Lambda_n=\Lambda$ by letting $n$ go to $+\infty$ in the
Euler-Lagrange equation for $v_n$ in $B_R$ (using the strong convergence of $v_n$).
\\

We can now conclude this second part: if $\Lambda>0$, we have, for $n$ large enough,
\[\mu_+(h_n)-\eps=\alpha_n\le\Lambda_n\le\Lambda + \eps, \]
and so $\mu_+(h_n)\le\Lambda+2\eps$.

If $\Lambda=0$ we have
\[\mu_+(h_n)/2=\alpha_n\le\Lambda_n\le\eps, \]
and so $0\le\mu_+(h_n)\le2\eps$.\\
In both cases, we have $\Lambda\leq\mu_+(h_n)\leq\Lambda+2\eps.$\\
~\\\textbf{Third part: $\lim\mu_-(h)=\Lambda$. } \\
Let $h_n$ be a sequence decreasing to 0, and let $\eps>0$. Because $h\to\mu_-(h)$ is
increasing, we just have to show that $\lim_{n\to\infty}\mu_-(h_n)\geq\Lambda-\eps$ for
a sub-sequence of $h_n$.

We first see that $\mu_-(h)\le\Lambda$.
Let $\Phi\in C^{\infty}_0(B_R,\R^d)$ be such that
$\int_{B_R}\nabla\cdot \Phi=1$ and let $u_t=u(x+t\Phi(x))$ for $t\geq 0$. We have (using the
proof of Lemma \ref{euler}),
\[a-h\le |\Omega_{u_t}|=a-t+o(t)\le a, \]
\[J(u_t)=J(u)+t\Lambda +o(t). \]
Now, using $(\ref{mu})$, we have
\[J(u)+\mu_-(h)a\le J(u)+t\Lambda +\mu_-(h)(a-t)+o(t), \]
and we get $\mu_-(h)\le\Lambda$.

Let $v_n$ be a solution of the following minimization problem,
\begin{equation}\label{vnmu}
J(v_n)+(\mu_-(h_n)+\eps)\left(|\Omega_{v_n}|-(a-h_n)\right)^+
=\min_{w\in\F,\ |\Omega_w|\le a} \left\{J(w)+(\mu_-(h)+\eps)\left(|\Omega_{w}|-(a-h_n)\right)^+\right\}.
\end{equation}
We will first see that,
\[a-h_n\le |\Omega_{v_n}|< a. \]
If $|\Omega_{v_n}|=a$ we have,
\[J(u)+(\mu_-(h_n)+\eps)|\Omega_u|\le J(v_n)+(\mu_-(h_n)+\eps)|\Omega_{v_n}|
\le J(w) + (\mu_-(h_n)+\eps)|\Omega_w|, \]
for $w\in\F$ with $a-h_n\le |\Omega_w|\le a$ which contradicts the definition
of $\mu_-(h_n)$.\\
Now, if $|\Omega_{v_n}|<a-h_n$, we have
$J(v_n)\le J(v_n+t\varphi)$ for every $\varphi\in C^{\infty}_0(B_R)$ with
$|\{\varphi\neq 0\}|<a-h_n-|\Omega_{v_n}|$. And we get that
$-\Delta v_n=\lambda_a v_n$ in $B_R$ and so, we have $v_n\equiv 0$ on $B_R$
or $v_n>0$ on $B_R$, but this last case contradicts $|\Omega_{v_n}|<a$.
If $v_n\equiv 0$ on $B_R$, because $v_n=u$ outside $B_R$, we get $u\in H^1_0(B_R)$,
and using $J(u)\le J(v_n),$
\[\int_{B_R}|\nabla u|^2-\lambda_a\int_{B_R}u^2\le 0. \]
We now deduce ($u\not \equiv 0$ on $B_R$) that
$\lambda_a\geq \lambda_1(B_R)$, which is a contradiction, at least for $R$
small enough.

We now study the sequence $v_n$ in a very similar way than above.\\

\textbf{Step 1: Euler-Lagrange equation for $v_n$.}
$J(v_n)\le J(v)$ for $v\in\F$ with $|\Omega_v|\le|\Omega_{v_n}|$, so we have
an Euler-Lagrange equation $(\ref{eulern})$ for $v_n$ in $B_R$ for some
$\Lambda_n$. \\

\textbf{Step 2: $\Lambda_n\le (\mu_-(h_n)+\eps)$}. Since $|\Omega_{v_n}|<a$, we
take $\Phi\in C^{\infty}_0(B_R,\R^d)$ with $\int_{B_R}\nabla\cdot \Phi=-1$ and
$v_n^t(x)=v_n(x+t\Phi(x))$ for $t\geq 0$ small.
We have $|\Omega_{v^t_n}|=|\Omega_{v_n}|+t+o(t)\le a$ and
$J(v_n^t)=J(v_n)-\Lambda_n t+o(t)$ and writting $(\ref{vnmu})$ with
$w=v_n^t$ we get the result. \\

\textbf{Step 3: $v_n$ strongly converge to some $v$.} As in step 3 above we just
write that
\[J(v_n)+(\mu_-(h_n)+\eps)\left(|\Omega_{v_n}|-(a-h_n)\right)\le J(u)+(\mu_-(h_n)+\eps)h_n, \]
to get (up to a sub-sequence) that $v_n$ weakly converge
in $H^1_0(D)$, strongly in $L^2(D)$ and almost-everywhere to $v\in\F$.
We have $a-h_n<|\Omega_{v_n}|\le a$ and so $\lim_{n\to\infty}|\Omega_{v_n}|=a$.
As in step 3 above, we deduce that $v$ is a solution of $(\ref{probfonc2})$, and
using
\[J(v_n)+(\mu_-(h_n)+\eps)\left(|\Omega_{v_n}|-(a-h_n)\right)
\le J(v)+(\mu_-(h_n)+\eps)\left(|\Omega_{v}|-(a-h_n)\right)^+, \]
we get the strong convergence in $H^1_0(D)$. \\

\textbf{Step 4: $\lim\Lambda_n=\Lambda$.}
The proof is exactly the same as in step 4 above in the study of
the limit of $\mu_+(h_n)$.

Now we have, using steps 2 and 4, for $n$ large enough,
\[\Lambda-\eps\le\Lambda_n\le\mu_-(h_n)+ \eps\le \Lambda+\eps, \]
and so $\lim_{n\to\infty}\mu_-(h_n)=\Lambda$. \\
~\\
\textbf{Fourth part: $\Lambda>0$.} We would like to show that $\Lambda>0$ (which implies $\mu_-(h)>0$ for $h$ small enough).
We argue by contradiction and we suppose that $\Lambda=0$. The proof is very close to the proof of Proposition
6.1 in \cite{B}. We start with the following proposition:

\begin{prop}\label{majgrad} Assume $\Lambda=0$. Then, there exists $\eta$ a decreasing function with
$\lim_{r\to 0}\eta(r)=0$ such that, if $x_0\in B_{R/2}$ and
$B(x_0,r)\subset B_{R/2}$ with $|\{u=0\}\cap B(x_0,r)|>0$, then
\begin{equation}\label{eq:grad}
\frac{1}{r}\moyg_{\partial B(x_0,r)}u\le \eta(r).
\end{equation}
\end{prop}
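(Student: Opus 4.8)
I would first recast the statement: let $\eta(r)$ denote the supremum of $\rho^{-1}\moyg_{\partial B(y,\rho)}u$ over all $y\in B_{R/2}$ and all $\rho\le r$ for which $B(y,\rho)\subset B_{R/2}$ and $|\{u=0\}\cap B(y,\rho)|>0$. Then $\eta$ is non-increasing by construction, and $\eta\le 2L$, where $L$ is the Lipschitz constant of $u$ on $B_{R/2}$ (Proposition~\ref{lip}): if $u$ vanishes somewhere in $B(y,\rho)$ then $u\le 2L\rho$ there, hence $\moyg_{\partial B(y,\rho)}u\le 2L\rho$. So it suffices to prove $\eta(r)\to 0$ as $r\to 0$; the assumption $\Lambda=0$ enters only through the first two parts of the present proof, which give $\mu_+(h)\to 0$ as $h\to 0$. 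Fix an admissible pair $(x_0,r)$, put $B:=B(x_0,r)$, and assume $u\not\equiv 0$ on $\partial B$ (otherwise $u\equiv 0$ on $B$ and there is nothing to prove). Let $v\in H^1_0(D)$ be $u$ on $D\setminus B$ and $-\Delta v=\lambda_a v$ in $B$. For $R$ small enough $\lambda_1(B)\ge\lambda_1(B_{R/2})\ge 2\lambda_a$ (compare $(\ref{choixr})$), so $-\Delta-\lambda_a$ obeys the maximum principle on $B$; since $\Delta u+\lambda_a u\ge 0$, $u$ is a subsolution, whence $0\le u\le v$ in $B$ and, by the strong maximum principle, $v>0$ in $B$, so $v$ is superharmonic and $\moyg_{\partial B(x_0,r)}u=\moyg_{\partial B(x_0,r)}v$.

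The first key step is an energy bound. Since $v\in\F$ and $|\Omega_v|=|\Omega_u|+|\{u=0\}\cap B|\in[a,a+|B|]$, for $r$ small enough that $|B|<h_0$ inequality $(\ref{lambda})$ with $\mu=\mu_+(|B|)$ gives $J(u)\le J(v)+\mu_+(|B|)\,|\{u=0\}\cap B|$; on the other hand, integrating by parts (using $-\Delta v=\lambda_a v$ and $u-v\in H^1_0(B)$) exactly as in the proof of Lemma~\ref{mes}, $J(u)-J(v)=\int_B|\nabla(u-v)|^2-\lambda_a\int_B(u-v)^2\ge\tfrac12\int_B|\nabla(u-v)|^2$ because $\lambda_1(B)\ge 2\lambda_a$. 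Writing $|B|=|B_1|r^d$ and $\theta(r):=2|B_1|\,\mu_+(|B_1|r^d)$ we obtain
\[\int_B|\nabla(u-v)|^2\ \le\ \theta(r)\,r^d,\qquad \theta(r)\to 0\ \text{ as }r\to0.\]
Next I would convert this into an $L^\infty$ bound on $w:=v-u\ge0$. As $u$ is Lipschitz on $B_{R/2}$ and $v$ solves $-\Delta v=\lambda_a v$ with $0\le v\le\|u\|_\infty$ and Lipschitz boundary data $u|_{\partial B}$, elliptic regularity up to $\partial B$ makes $w$ an $L'$-Lipschitz function on $\overline B$ ($L'$ depending only on $u,D$) that vanishes on $\partial B$. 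If $\|w\|_{L^\infty(B)}=s$ is attained at $x_*$, then $w\ge s/2$ on $B(x_*,s/(2L'))$, a ball necessarily contained in $B$ (else it would contain a boundary point where $w$ vanishes); hence $\int_Bw^2\ge c_dL'^{-d}s^{d+2}$, while Poincaré and the previous display give $\int_Bw^2\le\lambda_1(B_1)^{-1}r^2\theta(r)r^d$. Therefore
\[\|v-u\|_{L^\infty(B)}\ \le\ C\,r\,\theta(r)^{1/(d+2)}\ =:\ r\,\eta_0(r),\qquad \eta_0(r)\to 0\ \text{ as }r\to0,\]
and, replacing $\eta_0$ by $\rho\mapsto\sup_{0<\sigma\le\rho}\eta_0(\sigma)$ if needed, we may take $\eta_0$ non-decreasing, so the same modulus $\eta_0(r)$ bounds $\|v-u\|_{L^\infty}/(\text{radius})$ at every scale $\le r$.

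To conclude, suppose $\eta(r)\not\to0$: there are admissible $(x_0,r)$ with $r\to0$ and $r^{-1}\moyg_{\partial B(x_0,r)}u\ge\varepsilon_0$ for some fixed $\varepsilon_0>0$. Then $u(y)\ge\varepsilon_0 r$ for some $y\in\partial B(x_0,r)$; let $z$ be a closest point of $\partial\Omega_u$ to $y$ and $d:=|y-z|$, so that $\varepsilon_0 r/L\le d<2r$ (the first inequality by Lipschitzness, the second since $|\{u=0\}\cap B(x_0,r)|>0$), $u(z)=0$, and $|\{u=0\}\cap B(z,\rho)|>0$ for every $\rho\le d$ (otherwise Lemma~\ref{mes} would put $z$ in the interior of $\Omega_u$). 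From $u(y)\ge\varepsilon_0 r>\tfrac12\varepsilon_0 d$ and Lipschitzness, $u\ge\tfrac14\varepsilon_0 d$ on a spherical cap of $\partial B(z,d)$ of definite relative measure, so $d^{-1}\moyg_{\partial B(z,d)}u\ge c_1>0$ with $c_1=c_1(\varepsilon_0,L,d)$. Now iterate the construction of the first two paragraphs over the dyadic scales $\sigma_k=2^{-k}d$ centered at $z$: using that the $\lambda_a$-harmonic replacement $v^{(k)}$ in $B(z,\sigma_k)$ is superharmonic — so that $\moyg_{\partial B(z,\sigma_{k+1})}v^{(k)}\ge\moyg_{\partial B(z,\sigma_k)}v^{(k)}=\moyg_{\partial B(z,\sigma_k)}u$ — together with the $L^\infty$-estimate above, one gets $b_{k+1}\ge 2b_k-2\eta_0(d)$ for $b_k:=\sigma_k^{-1}\moyg_{\partial B(z,\sigma_k)}u$. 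Since $b_0\ge c_1$ and $\eta_0(d)\le\eta_0(2r)\to0$, for $r$ small this forces $b_k\to\infty$; but $z\in\partial\Omega_u$ lies in every $B(z,\sigma_k)$, so $b_k\le 2L$ — a contradiction. The main obstacle is this last iteration: turning the $L^\infty$-smallness of $u-v$ into a self-improving lower bound for the spherical averages that contradicts the Lipschitz bound at a free-boundary point, together with the routine localization that keeps all balls $B(z,\rho)$, $\rho\le d$, inside $B_{R/2}$. The scheme is that of Proposition~6.1 in \cite{B}.
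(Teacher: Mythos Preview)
Your approach differs substantially from the paper's and has a genuine gap at the $L^\infty$ step.

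The paper's argument is short and direct: it takes the comparison $-\Delta v=\lambda_a u$ (not $\lambda_a v$), obtains the energy identity
\[
\int_{B_r}|\nabla(u-v)|^2+\lambda_a\int_{B_r}(u-v)^2\ \le\ \mu_+(\omega_dr^d)\,|\{u=0\}\cap B_r|,
\]
and then invokes the Alt--Caffarelli estimate (from \cite{AC},\cite{GSh})
\[
|\{u=0\}\cap B_r|\Bigl(\frac1r\moyg_{\partial B_r}u\Bigr)^2\ \le\ C\int_{B_r}|\nabla(u-v)|^2.
\]
The factor $|\{u=0\}\cap B_r|$ cancels, giving $\bigl(r^{-1}\moyg_{\partial B_r}u\bigr)^2\le C\mu_+(\omega_dr^d)\to 0$. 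You discard precisely this factor (bounding it by $|B_r|$), which is why you are then forced into the dyadic iteration at the end.

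The gap in your argument is the claim that ``elliptic regularity up to $\partial B$ makes $w$ an $L'$-Lipschitz function on $\overline B$ ($L'$ depending only on $u,D$)''. Harmonic --- hence $\lambda_a$-harmonic --- extensions of merely Lipschitz boundary data are in general \emph{not} Lipschitz up to the boundary: on the unit disk the harmonic extension $H$ of $g(\theta)=|\theta|$ satisfies $H(r,0)\sim c\,(1-r)\log\frac{1}{1-r}$ as $r\uparrow 1$, so $\partial_r H(1,0)=-\infty$. Since you use the uniform Lipschitz bound $L'$ both to produce a ball of radius $s/(2L')$ on which $w\ge s/2$ \emph{and} to force that ball to stay inside $B$ (via the ``else it would contain a boundary point'' step), the conclusion $\|v-u\|_{L^\infty(B)}\le r\,\eta_0(r)$ is unproven as stated. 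Your iteration can in fact be salvaged without any $L^\infty$ control: from $w\in H^1_0(B)$ and $\int_B|\nabla w|^2\le\theta(r)r^d$, a rescaled Poincar\'e/trace inequality on $\partial B_{1/2}$ gives directly
\[
\moyg_{\partial B(z,\sigma_k/2)}|u-v^{(k)}|\ \le\ C\,\sigma_k\sqrt{\theta(\sigma_k)},
\]
whence $b_{k+1}\ge 2b_k-2C\sqrt{\theta(\sigma_k)}$, and the rest of the argument goes through. But as written the proof is incomplete.
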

\textbf{Proof of Proposition \ref{majgrad}. } Let $x_0,r$ be as above, and we set $B_r=B(x_0,r)$. Let $v$ be defined by,
\[\left\{\begin{array}{cccl}
-\Delta v & = &\lambda_a u &\textrm{ in } B_r\\
v & = & u &\textrm{ on } \partial B_r,
\end{array}
\right.
\]
and $v=u$ outside $B_r$. We have $v>0$ on $B_r$. We get, using
$(\ref{lambda})$,
\begin{equation}\label{moyp1}
\int_{B_r}(|\nabla u|^2-|\nabla v|^2)
-\lambda_a \int_{B_r}(u^2 -v^2)
\le   \mu_+(\omega_d r^d)|\{u=0\}\cap B_r| ,
\end{equation}
we also get (using $-\Delta v=\lambda_au$ in $B_r$),
\begin{eqnarray}\label{moyp2}
\int_{B_r}(|\nabla u|^2-|\nabla v|^2)
-\lambda_a\int_{B_r} u^2-v^2
& = &  \int_{B_r}\nabla (u-v).\nabla (u-v+2v)
-\lambda_a\int_{B_r} u^2-v^2 \nonumber \\
 & = &\int_{B_r}|\nabla (u-v)|^2+\lambda_a\int_{B_r}(u-v)^2.
\end{eqnarray}
Now, with the same computations as in \cite{AC},\cite{GSh} (with $\lambda_a u$
instead of $f$) we get,
\begin{equation}\label{moyp3}
|\{u=0\}\cap B_r|\left(\frac{1}{r}\moyg_{\partial B_r}u \right)^2
\le C \int_{B_r}|\nabla (u-v)|^2.
\end{equation}
Now, using $(\ref{moyp1}),(\ref{moyp2})$ and $(\ref{moyp3})$ we
get the result.\qed
~\\
\textbf{End of proof of Theorem \ref{theopena}. }
Now, the rest of the proof is the same as Proposition 6.2 in \cite{B} with
$\lambda_a u$ instead of $f\chi_{\Omega_u}$. The idea is that, from the estimate (\ref{eq:grad}) of Proposition \ref{majgrad}, $\nabla u$ tends
to 0 at the boundary, and consequently the measure $\Delta u$ does not charge the boundary $\partial\Omega_u$. It follows that
$-\Delta u=\lambda_a u$ in $B_R$, which, by strict maximum principle, contradicts that $u$ is zero on some part of $B_R$.\qed

\end{section}

\begin{section}{Proof of Theorem \ref{theoreg}}
Let $\Omega^*$ be a solution of (\ref{probforme}). Then $u=u_{\Omega^*}$ is a solution of (\ref{probfonc2}), and thus satisfies Proposition \ref{lip}
 and Theorem \ref{theopena};
 moreover, $\Omega^*=\Omega_u$. Like in the previous section, we work in $B$, a small ball centered in $\partial\Omega_u$.
Since the approach is local, we will show regularity for the part of
$\partial\Omega_u$ included in $B$; but $B$ can be centered on every point of
$\partial\Omega_u\cap D$, so this is of course enough to lead to the announced results in Theorem \ref{theoreg}.\\
Coupled with Remark \ref{remCaf}, we conclude that it is sufficient to prove:
\begin{equation}\label{acc}
\left.
\begin{array}{cll}
(a)& \Omega^*\textrm{ has finite perimeter in }B\textrm{ and }
\haus((\partial\Omega^*\setminus\partial^*\Omega^*)\cap B)=0\\[3mm]
(b)& \Delta u_{\Omega^*}+\lambda_1(\Omega^*)u_{\Omega^*}=\sqrt{\Lambda} \haus\lfloor\partial\Omega^*\;\;in\;\;B,&\\[3mm]
(c)& \textrm{if d=2, }\partial\Omega^*\cap B=\partial^*\Omega^*\cap B.
\end{array}
\right\}
\end{equation}
We use the same arguments as in \cite{AC} and \cite{GSh}, but we have to deal with the term
in $\int u^2$ instead of $\int fu$ (in $\cite{GSh}$).
So we first start with the following technical lemma.

\begin{lemma}\label{lem}
There exist $C_1,C_2,r_0>0$ such that,
for $B(x_0,r)\subset B$ with $r\le r_0$,
\begin{equation}\label{moyencadre}
\begin{array}{ccl}
\textrm{if }\;\frac{1}{r}\moyg_{\partial B(x_0,r)}u \geq C_1
& \textrm { then } & u>0 \textrm{ on } B(x_0,r), \\
\textrm{if }\;\frac{1}{r}\moyg_{\partial B(x_0,r)}u \le C_2
& \textrm { then } & u \equiv 0 \textrm{ on } B(x_0,r/2).
\end{array}
\end{equation}
\end{lemma}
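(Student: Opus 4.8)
The plan is to treat the two implications separately, each by a comparison argument with a harmonic-type replacement, exactly in the spirit of Alt--Caffarelli \cite{AC} and \cite{GSh}, but keeping track of the zeroth-order term $\lambda_a u$ in place of the right-hand side $f$ used there.

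\medskip\noindent\textbf{First implication (large mean $\Rightarrow$ positivity).}
Fix $B(x_0,r)\subset B$ and let $v$ solve $-\Delta v=\lambda_a u$ in $B(x_0,r)$ with $v=u$ on $\partial B(x_0,r)$, extended by $u$ outside. Since $u\ge 0$, the maximum principle gives $v\ge 0$, and in fact $v>0$ in $B(x_0,r)$ unless $u\equiv0$ on $\partial B(x_0,r)$. Write $v=v_0+w$, where $v_0$ is harmonic in $B(x_0,r)$ with boundary data $u$ and $w$ solves $-\Delta w=\lambda_a u$, $w=0$ on $\partial B(x_0,r)$; standard elliptic estimates bound $w$ and $\nabla w$ on $B(x_0,r/2)$ by $C\lambda_a\|u\|_\infty r^2$, which is lower-order in $r$. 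By the mean value property, $v_0(x_0)=\moyg_{\partial B(x_0,r)}u\ge C_1 r$, and Harnack plus gradient estimates for the nonnegative harmonic function $v_0$ give a lower bound $v_0\ge c\,C_1 r$ on $B(x_0,r/2)$, hence $v\ge (c\,C_1-C\lambda_a\|u\|_\infty r)r>0$ on $B(x_0,r/2)$ once $C_1$ is chosen large and $r\le r_0$ small. To conclude $u>0$ one argues by contradiction: if $|\{u=0\}\cap B(x_0,r)|>0$, then $u$ is an admissible competitor against $v$ (they agree outside $B(x_0,r)$, and passing to $v$ does not increase $|\Omega_\cdot|$, so (\ref{mu}) applies) and the minimality inequality $J(u)+\mu_-|\Omega_u|\le J(v)+\mu_-|\Omega_v|$ combined with the identity (\ref{moyp2}) forces $\int_{B(x_0,r)}|\nabla(u-v)|^2$ to be small; but the non-degeneracy lower bound on $v$ just obtained contradicts the linear-growth-type inequality (\ref{moyp3}) (read in reverse: positive measure of the zero set forces $\int|\nabla(u-v)|^2$ to be comparable to $(\frac1r\moyg u)^2\,r^d$), as in \cite{AC}, \cite{GSh}, \cite{B}. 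Hence $|\{u=0\}\cap B(x_0,r)|=0$, and then Lemma \ref{mes} gives $u>0$ on all of $B(x_0,r)$.

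\medskip\noindent\textbf{Second implication (small mean $\Rightarrow$ vanishing).}
Now suppose $\frac1r\moyg_{\partial B(x_0,r)}u\le C_2$. Build the competitor $v$ by $v=u$ outside $B(x_0,r)$ and, inside, $v=\max(u-\varphi,0)$-type construction — more precisely, following \cite{AC}, let $v$ be the function agreeing with $u$ on $\partial B(x_0,r)$ whose positivity set inside is as small as possible, e.g.\ compare $u$ with the solution of an obstacle/truncation problem; alternatively one uses the test function that is $0$ on $B(x_0,r/2)$, equals $u$ outside $B(x_0,r)$, and interpolates harmonically on the annulus. For the latter, $|\Omega_v|\le|\Omega_u|$ so (\ref{mu}) (with $\mu_-\ge0$) gives $J(u)\le J(v)+\mu_-(|\Omega_u|-|\Omega_v|)$, hence $J(u)\le J(v)+\mu_-|\{u>0\}\cap B(x_0,r/2)|$. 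Expanding $J(u)-J(v)$ as in (\ref{moyp2}) and estimating the Dirichlet energy of the harmonic interpolant on the annulus by $C r^{d-2}\big(\moyg_{\partial B(x_0,r)}u\big)^2\le C C_2^2 r^d$, together with the lower-order $\lambda_a\int u^2$ terms, yields
\[
\int_{B(x_0,r)}|\nabla u|^2 \le C\big(C_2^2+\mu_-+\lambda_a\|u\|_\infty^2\big)r^d .
\]
Feeding this energy smallness into the standard Alt--Caffarelli decay/iteration scheme (Lemma 3.2 and its consequences in \cite{AC}; see also \cite{GSh}) — i.e.\ iterating the comparison on dyadic balls and using that $\frac1\rho\moyg_{\partial B(x_0,\rho)}u$ stays below the threshold — one gets that $u$ must vanish identically on $B(x_0,r/2)$ provided $C_2$ is small enough and $r\le r_0$.

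\medskip\noindent\textbf{Main obstacle.}
The genuinely new point compared with \cite{AC},\cite{GSh} is bookkeeping the zeroth-order term: one must check that replacing $f$ by $\lambda_a u$ — which is itself only Lipschitz and vanishes on $\partial\Omega_u$ — contributes only terms of order $r^d$ (or better) and never competes with the critical scaling $r^{d-2}\cdot(\text{mean})^2$ of the Dirichlet energy on the annulus. Since $u$ is bounded (Proposition \ref{lip}), $\lambda_a\int_{B(x_0,r)}u^2\le C r^d$ and the cross terms $\lambda_a\int_{B(x_0,r)}(u-v)u\le C\,r^{d/2}\|\nabla(u-v)\|_{L^2}\,r$ are absorbed by Young's inequality into $\frac12\int|\nabla(u-v)|^2$ plus $Cr^{d+?}$; so they are harmless. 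The other point requiring care is that the competitors $v$ must lie in $\mathcal F$ and satisfy the measure constraints of (\ref{mu})/(\ref{lambda}) so that Theorem \ref{theopena} is applicable with a \emph{fixed} $\Lambda$ (for $r\le r_0$ small, $|\Omega_v|$ stays in the admissible window $[a-h,a]$), which is exactly why the penalized formulation was established first.
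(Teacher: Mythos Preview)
Your first implication is essentially the paper's argument (it simply reinvokes the computations (\ref{moyp1})--(\ref{moyp3}) from Proposition~\ref{majgrad} and observes that now $\mu_+(\omega_d r^d)$ is bounded by a finite constant), except that you have the penalization backwards: the competitor $v$ solving $-\Delta v=\lambda_a u$ in $B(x_0,r)$ is strictly positive there, so $|\Omega_v|\ge|\Omega_u|$ and it is (\ref{lambda}) with $\mu_+$, not (\ref{mu}) with $\mu_-$, that applies. The decomposition $v=v_0+w$ and the Harnack step are unnecessary; the chain (\ref{moyp1})+(\ref{moyp2})+(\ref{moyp3}) already gives $|\{u=0\}\cap B_r|\bigl(\tfrac1r\moy_{\partial B_r}u\bigr)^2\le C\mu_+(\omega_d r^d)\,|\{u=0\}\cap B_r|$, hence the conclusion.

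The second implication, however, has two genuine gaps. First, your ``harmonic interpolant on the annulus'' competitor is strictly positive on the whole open annulus $B_r\setminus\overline{B}_{r/2}$ by the strong maximum principle, so $|\Omega_v|\le|\Omega_u|$ is \emph{false} whenever $u$ vanishes on a set of positive measure in that annulus; you then cannot invoke (\ref{mu}). The paper avoids this by the $\varepsilon$-level-set construction of \cite{ACF}: one builds $v_\varepsilon$ equal to $u$ on $\{u\le\varepsilon\}$ and lets $\varepsilon\to0$, so that the limit $v$ satisfies $\Omega_v\subset\Omega_u\setminus B_{r/2}$ exactly. Second, your claimed bound $\int_{\text{annulus}}|\nabla v|^2\le C r^{d-2}\bigl(\moy_{\partial B_r}u\bigr)^2$ is wrong: the Dirichlet energy of the harmonic extension is controlled by the $H^{1/2}$ (or $L^\infty$) trace, not by the mean. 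The missing ingredient is the $L^\infty$ bound $\|u\|_{\infty,B_{r/2}}\le C(\gamma r+r^2)$ with $\gamma=\tfrac1r\moy_{\partial B_r}u$, which the paper derives from $\Delta u+\lambda_a u\ge 0$ (Proposition~\ref{lip}) via a Poisson-kernel comparison. With this in hand, the paper does \emph{not} iterate: it estimates the inward normal derivative of the competitor on $\partial B_{r/2}$ by $C(\gamma+r)$ via auxiliary barriers $w_\varepsilon,w_\varepsilon^0,w_\varepsilon^1$, sets $L=\int_{B_{r/2}}|\nabla u|^2+\mu_-(h)|\Omega_u\cap B_{r/2}|$, and closes in one shot by showing $L\le CL(\gamma+r)$; here the uniform lower bound $\mu_-(h)\ge\Lambda/2>0$ from Theorem~\ref{theopena} is essential to absorb $|\Omega_u\cap B_{r/2}|$ back into $L$. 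Your appeal to an ``Alt--Caffarelli decay/iteration scheme'' does not substitute for these two missing pieces.
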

\begin{proof} The first point comes directly from the proof
of Proposition \ref{majgrad}.
We take the same $v$ and,
using equation $(\ref{moyp3})$, we see that there exists $C_1$ such
that if $\frac{1}{r}\moy_{\partial B(x_0,r)}u \geq C_1$, then
$|\{u=0\}\cap B(x_0,r)|=0$.

For the second part we argue as in Theorem 3.1 in \cite{ACF}. We will denote
$B_r$ for $B(x_0,r)$. In this proof, $C$ denotes (different)
constants which depend only on $a,d,D,u$ and $B$, but not
on $x_0$ or $r$.

Let $\eps>0$ small and such that $\{u=\eps\}$ is smooth (true for almost every $\eps$), let
$D_{\eps}=(B_r\setminus \overline{B}_{r/2})\cap\{u>\eps\}$
and $v_{\eps}$ be defined by
$$\left\{\begin{array}{cccl}
-\Delta v_{\eps} & = & \lambda_a u& \textrm { in } D_{\eps}\\
v_{\eps} & = &u&  \textrm{ in } D\setminus B_r \\
v_{\eps} & = &u &  \textrm{ in } B_r\cap\{u\le \eps \} \\
v_{\eps} & = & \eps &\textrm{ in } \overline{B}_{r/2} \cap\{u> \eps \}.
\end{array}
\right.$$
We see that $u-v_{\eps}$ is harmonic in $D_{\eps}$.

We now show that $(v_{\eps}-u)_{\eps}$ is bounded in $H^1(D)$, for small $\eps>0$.
Let $\varphi$ be in $C^{\infty}_0(B_r)$ with $0\le\varphi\le 1$ and
$\varphi\equiv 1$ on $\overline{B}_{r/2}$.  Let $\Psi=(1-\varphi)u+\eps\varphi=
u + \varphi(\eps-u)$. We have:
\[\Psi-u=0=v_{\eps}-u \textrm{ on } \partial B_r\cup(\partial D_{\eps}\cap
(B_r\setminus \overline{B}_{r/2})), \]
and
\[\Psi-u=\eps-u=v_{\eps}-u \geq -\|u\|_{\infty}
\textrm{ on } \partial D_{\eps}\cap \partial B_{r/2}, \]
so using that $v_{\eps}-u$ is  harmonic, we get
$-\|u\|_{\infty}\le v_{\eps}-u\le 0$ on $D_{\eps}$
and,
\[\int_{D_{\eps}}|\nabla (v_{\eps}-u)|^2 \le
\int_{D_{\eps}}|\nabla (\Psi-u)|^2. \]
Now, using that $\nabla \Psi=\nabla u(1-\varphi)-(\nabla\varphi) u + \eps\nabla\varphi $
and the $L^{\infty}$ bounds for $u$ and $\nabla u$, we see
that $v_{\eps}-u$ is bounded in $H^1(D)$.

Now, up to a subsequence, $v_{\eps}$ weakly converges in $H_0^1(D)$
to $v$ such that:
$$\left\{\begin{array}{cccl}
-\Delta v & = & \lambda_a u &\textrm { in } (B_r\setminus\overline{B}_{r/2})\cap\Omega_u\\
v & = &u  &\textrm{ in } D\setminus B_r \\
v & = & 0 &\textrm{ in } \overline{B}_{r/2} \cup (B_r\cap\{u=0\}).
\end{array}
\right.$$
Using $(\ref{mu})$  with $h=|B_{r/2}|$, and $u=v$ in $D\setminus B_r$, we have:
\[
\int_{B_r}|\nabla u|^2-\lambda_a\int_{B_r}u^2 + \mu_-(h) |\Omega_u\cap B_r|
\le \int_{B_r}|\nabla v|^2-\lambda_a\int_{B_r}v^2
+ \mu_-(h) |\Omega_v\cap B_r|,
\]
and so,
\begin{eqnarray}
&&\hspace{-1 cm}\int_{B_{r/2}}|\nabla u|^2\mu_-(h) |\Omega_u\cap B_{r/2}|
\nonumber \\
&\le& \int_{B_r\setminus B_{r/2}}\nabla(v-u).\nabla (u-v+2v)
-\lambda_a\int_{B_r\setminus B_{r/2}}(v^2-u^2)
+\lambda_a\int_{B_{r/2}} u^2 \nonumber \\
&\le&  \liminf_{\eps\to\ 0 }
2\int_{D_\eps}\nabla(v_{\eps}-u).\nabla v_{\eps}
-\lambda_a\int_{D_{\eps}}(v_{\eps}^2-u^2)
+\lambda_a\int_{B_{r/2}} u^2 \nonumber \\
&=&\liminf_{\eps\to\ 0 }  2\int_{\partial B_{r/2}\cap\{u>\eps\}}
(\eps-u)\frac{\partial v_{\eps}}{\partial n}
+2\lambda_a\int_{D_{\eps}}(v_{\eps}-u)u
-\lambda_a\int_{D_{\eps}}(v_{\eps}^2-u^2)
+\lambda_a\int_{B_{r/2}} u^2 \nonumber \\
&=&\liminf_{\eps\to\ 0 } 2\int_{\partial B_{r/2}\cap\{u>\eps\}}
(\eps-u)\frac{\partial v_{\eps}}{\partial n}
+\lambda_a\int_{D_{\eps}}(2uv_{\eps}-u^2-v_{\eps}^2)
+\lambda_a\int_{B_{r/2}} u^2
\nonumber \\
&\le&\liminf_{\eps\to\ 0 }
 2\int_{\partial B_{r/2}\cap\{u>\eps\}}
(\eps-u)\nabla v_{\eps}.\overrightarrow{n}
+\lambda_a\int_{B_{r/2}} u^2,
\label{mino1}
\end{eqnarray}
where $\overrightarrow{n}$ is the outward normal of $D_{\eps}$
 and so the inward normal of $B_{r/2}$.
Let $w_{\eps}$ be such that,
$$\left\{\begin{array}{cccl}
-\Delta w_{\eps} & = &
\lambda_a u  &\textrm{ on } B_r\setminus \overline{B}_{r/2}\\
w_{\eps} & = & u &\textrm{ on } \partial B_r\cap\{u>\eps\} \\
w_{\eps} & = & \eps &\textrm{ on }
\left(\partial B_r\cap\{u\le\eps\}\right)\cup\partial B_{r/2}. \\
\end{array}
\right.$$
Because $w_{\eps}\geq \eps$ on $\partial(B_r\setminus B_{r/2})$
and super-harmonic in $B_r\setminus\overline{B}_{r/2}$, we get that
$w_{\eps}\geq\eps$ in $B_r\setminus \overline{B}_{r/2}$.
In particular $w_{\eps}\geq v_{\eps}=\eps$ in $\partial D_{\eps}\cap
(B_r\setminus\overline{B}_{r/2})$. Moreover, we also have $w_{\eps}\geq v_{\eps}$ on
$\partial D_{\eps}\cap(\partial B_r\cup\partial B_{r/2})$, and since $w_{\eps}-v_{\eps}$
is harmonic in $D_{\eps}$, we get $w_{\eps}\geq v_{\eps}$ in
$D_{\eps}$. Using $w_{\eps}=v_{\eps}=\eps$ on
$\partial B_{r/2}\cap \{u>\eps\}$, we can now compare the
gradients of $w_{\eps}$ and
$v_{\eps}$ on this set,
\begin{equation}\label{mino2}
0\le -\nabla v_{\eps}.\overrightarrow{n}
\le -\nabla w_{\eps}.\overrightarrow{n}\textrm{ on }\partial B_{r/2}\cap \{u>\eps\}.
\end{equation}

Let now $w^0_{\eps}$ be defined by $w^0_{\eps}=w_{\eps}$ on
$\partial(B_r\setminus \overline{B}_{r/2})$ and harmonic in
$B_r\setminus B_{r/2}$. We use now the following estimate:
\begin{equation}\label{mino3}
0\le -\nabla w^0_{\eps}.\overrightarrow{n}\le
\frac{C}{r}\moyg_{\partial B_r}(u-\eps)^+\le C\gamma
\textrm{ on } \partial B_{r/2},
\end{equation}
where $\gamma=\frac{1}{r}\moy_{\partial B_r}u$ (to get this estimate, we can first prove, using a comparison argument, that $|\nabla w^0_\eps|\leq\frac{C}{r}\|w^0_\eps-\eps\|_{\infty,B_{3r/4}\setminus B_{r/2}}$, and then conclude using again maximum principle and Poisson formula for functions that are harmonic in a ball).
Let $w^1_{\eps}=w_{\eps}-w^0_{\eps}$, we have
$w^1_{\eps}=0$ on $\partial (B_r\setminus B_{r/2})$ and
$-\Delta w^1_{\eps}=\lambda_a u$ in $B_r\setminus \overline{B}_{r/2}$ and so,
\begin{equation}\label{mino4}
\|\nabla w^1_{\eps}\|_{\infty,B_r\setminus B_{r/2}}
\le  Cr \|u\|_{\infty}\le Cr.
\end{equation}
Now using $(\ref{mino1}),(\ref{mino2}),(\ref{mino3})$ and
$(\ref{mino4})$ we get,
\begin{equation}\label{mino5}
L:=\int_{B_{r/2}}|\nabla u|^2+ \mu_-(h) |\Omega_u\cap B_{r/2}|
\le C(\gamma+ r)\int_{\partial B_{r/2}}u
+\lambda_a\int_{B_{r/2}}u^2.
\end{equation}
Our goal is now to bound from above the right-hand of this inequation
with $CL(\gamma+r)$: and so if $\gamma$ and $r$ are small enough we
will get $L=0$ and so $u\equiv 0$ in $B_{r/2}$.

We now give an estimate of $\|u\|_{\infty,B_{r/2}}$ in
term of $\gamma$. Let $w=0$ on $\partial B_r$ and
$-\Delta w=\lambda_a u$ in $B_r$. We have
(using $(\ref{deltaumes})$)
$\Delta(u-w)=\Delta u + \lambda_a u\geq 0$ in
$B_r$ and $u-w=u$ on $\partial B_r$ so,
\[\|u-w\|_{\infty,B_{r/2}}\le C\moyg_{\partial B_r} u\le C\gamma r. \]
We also have that
\[\|w\|_{\infty,B_r}\le Cr^2\|u\|_{\infty,B_r}\le Cr^2, \]
and finally,
\begin{equation}\label{mino6}
\|u\|_{\infty,B_{r/2}}\le C(\gamma r + r^2).
\end{equation}
We now write (using $(\ref{mino6})$),
\begin{eqnarray}
\int_{\partial B_{r/2}}u & \le &
C \left( \int_{B_{r/2}}|\nabla u| + \frac{1}{r}\int_{B_{r/2}}u
\right)\nonumber \\
& \le &
C\left(\frac{1}{2} \int_{B_{r/2}}|\nabla u|^2 +
\frac{1}{2}|\Omega_u\cap B_{r/2}|
+ \frac{1}{r}|\Omega_u\cap B_{r/2}|\|u\|_{\infty,B_{r/2}}\right).
\nonumber
\end{eqnarray}
Here we use Theorem \ref{theopena} to see that there
exists $h_0$ such that
\[\frac{\Lambda}{2}\le \mu_-(h)\le \Lambda,
\ 0< h\le h_0. \]
And so, we have
\begin{eqnarray}
\int_{\partial B_{r/2}}u  & \le &
C \left( \int_{B_{r/2}}|\nabla u|^2 + \mu_-(h)|\Omega_u\cap B_{r/2}|
+ C|\Omega_u\cap B_{r/2}|(\gamma +r) \right) \nonumber
\\
&\leq&CL(1+\gamma+r) \label{mino7},
\end{eqnarray}
with $C$ independent of $r$ for every $r$ small enough
such that $h=|B_{r/2}|\le h_0$.
We also have (using $(\ref{mino6})$)
\begin{equation}\label{mino8}
\int_{B_{r/2}}u^2\le C |\Omega_u\cap B_{r/2}|(\gamma r + r^2)
\le CL(\gamma r+r^2).
\end{equation}
We now get, from $(\ref{mino5}),(\ref{mino7})$
and $(\ref{mino8})$, if $\gamma\le 1$ and $r\le 1$,
\[L\le C(\gamma + r)L(1+\gamma+r)+CL(\gamma r+r^2)
\le CL(\gamma + r),\]
and, if we suppose $r\le \frac{1}{2C}$ we get,
\[L\le CL\gamma +\frac{L}{2},\]
and so, if $\gamma<\frac{1}{2C}$ we get $L=0$ and
$u\equiv 0$ on $B_{r/2}$. \qed
\end{proof}
\bigskip
With the help of this lemma, we are now able to successively prove the three properties (a),(b) and (c) of (\ref{acc}).\\
~\\
\textbf{Proof of (a). }The proof is now, using $(\ref{moyencadre})$ in lemma \ref{lem},
the same as in \cite{GSh} or
in \cite{AC}. Here are the main steps:
we first show that there exists $C_1,C_2$ and
$r_0$ such that, for every $B(x_0,r)\subset B$
with $r\le r_0$,
\[0<C_1\le\frac{|B(x_0,r)\cap\Omega_u|}{|B(x_0,r)|}
\le C_2<1, \]
and
\[C_1r^{d-1}\le (\Delta u+\lambda_a u)
(B(x_0,r))\le C_2 r^{d-1}. \]
The proof is the same as in \cite{GSh}
with $\lambda_a u$ instead of $f$.
It gives directly (using the Geometrical
measure theory, see section 5.8 in \cite{EG}) the first point of Theorem \ref{theoreg}.\\~\\
\textbf{Proof of (b). }For the second point, we see that $\Delta u +\lambda_a u$
is absolutely continuous with respect to
$\haus\lfloor\partial\Omega_u$ which is
a Radon-Measure (using the first point), so we
can use Radon's Theorem. To compute the
Radon's derivative, we argue as in Theorem 2.13
in \cite{GSh} or (4.7,5.5) in \cite{AC}. The
main difference is that here, we
have to use $(\ref{limmu})$ in Theorem \ref{theopena}
to show that, if $u_0$ denotes a blow-up limit of $u(x_0+rx)/r$
(when $r$ goes to 0), then $u_0$ is such that,
\[\int_{B(0,1)}|\nabla u_0|^2 + \Lambda
|\{u_0\neq 0\}\cap B(0,1)| \le
\int_{B(0,1)}|\nabla v|^2 + \Lambda
|\{v\neq 0\}\cap B(0,1)|, \]
for every $v$ such that $v=u_0$ outside
$B(0,1)$. To show this, in \cite{AC} or
in \cite{GSh} the authors use only perturbations in
$B(x_0,r)$ with $r$ goes to $0$, so using
$(\ref{limmu})$, we get the same result.
We can compute the Radon's derivative and
get (in $B$)
\[\Delta u+\lambda_a u =\sqrt{\Lambda}\haus\lfloor\partial
\Omega_u.\]
Now, $u$ is a weak-solution in the sense of \cite{GSh}
and \cite{AC} and we directly get the analytic regularity
of $\partial^*\Omega_u$ (this regularity
is shown for weak-solutions).\\~\\
\textbf{Proof of (c). }If $d=2$, in order to have
the regularity of the whole boundary, we
have to show that Theorem 6.6 and Corollary 6.7
in \cite{AC} (which are for solutions and not weak-solutions)
are still true for our problem.  The Corollary directly
comes from the Theorem.
So we need to show that, if
$d=2$ and $x_0\in\partial\Omega_u$, then
\begin{equation}\label{dim2}
\lim_{r\to 0}\moyg_{B(x_0,r)}
\max\{\Lambda-|\nabla u|^2,0\}=0.
\end{equation}
We argue as in Theorem 6.6 in \cite{AC}. Let
$\zeta\in C^{\infty}_0(B)$ be nonnegative and let
$v=\max\{u-\eps\zeta,0\}$. Using $(\ref{mu})$ with
this $v$ and $h=|0<u\le\eps\zeta|\le |\{\zeta\neq 0\}|$ we get,
\begin{eqnarray*}
\mu_-(h)|0<u\le\eps\zeta| & \le &
\int |\nabla v|^2 - \int |\nabla u|^2
+\lambda_a\int (u^2-v^2) \\
& = &
\int |\nabla\min\{\eps\zeta,u\}|^2
- 2\int \nabla u .\nabla\min\{\eps\zeta,u\}
\\
& & +
\lambda_a\int_{\{u<\eps\zeta\}}u^2
-\lambda_a\int_{\{u\geq\eps\zeta\}}(\eps\zeta)^2
+2\lambda_a \int_{\{u\geq\eps\zeta\}}u\eps\zeta.
\end{eqnarray*}
Using $-\Delta u=\lambda_a u$ in $\Omega_u$ we get:
\[\int \nabla u .\nabla\min\{\eps\zeta,u\}=
\lambda_a \int u\min\{\eps\zeta,u\}=
\lambda_a\int_{\{u<\eps\zeta\}}u^2
+\lambda_a\int_{\{u\geq \eps\zeta\}}u\eps\zeta, \]
and so,
\[\mu_-(h)|0<u\le\eps\zeta|
\le \int_{\{u<\eps\zeta\}}|\nabla u|^2
+\int_{\{u\geq \eps\zeta\}}\eps^2|\nabla\zeta|^2
-\lambda_a\int_{\{u<\eps\zeta\}}u^2 -
\lambda_a\int_{\{u\geq\eps\zeta\}}(\eps\zeta)^2,
\]
and so, we can deduce that,
\[\int_{\{0<u<\eps\zeta\}}(\Lambda-|\nabla u|^2)
\le \int_{\{u\geq \eps\zeta\}}\eps^2|\nabla\zeta|^2
+(\Lambda-\mu_-(h))h. \]
The only difference now with \cite{AC}
is the last term. Using
Theorem \ref{theopena}, we see that
$(\Lambda-\mu_-(h))h=o(h)$, so we can
choose the same kind of $\zeta$ and $\eps$ as in \cite{AC}
to get $(\ref{dim2})$ (see Theorem 5.7
in \cite{B} for more details).\qed
\end{section}

\begin{section}{Appendix}
In this appendix, we discuss the hypothesis ``$D$ is connected''. We begin with the following example,
taken from \cite{BHP}.
\begin{example}(from \cite{BHP})
We take $D=D_1\cup D_2$, where $D_1,D_2$ are disjoint disks in $\R^2$ of radius $R_1,R_2$ with $R_1>R_2$.
If $a=\pi R_1^2+\eps$, then the solution $u$ of $(\ref{probfonc})$ coincides with the first eigenfunction of $D_1$
and is identically 0 on $D_2$, and thus $\Omega_u=D_1$ and $|\Omega_u|<a$.\\
In this case, we can choose an open subset $\omega$ of $D_2$ with $|\omega|=\eps$. Then $\Omega^*:=D_1\cup\omega$
is a solution of $(\ref{probforme})$. Since $\omega$ may be chosen as irregular as one wants, this proves that optimal domains are not regular in
 general.
\end{example}
However, we are able to prove the following proposition.
\begin{prop}[The non-connected case]\label{noncon}
If we suppose that $D$ is not connected, the problem $(\ref{probfonc})$
still has a solution $u$ which is locally Lipschitz
continuous in $D$. If $\omega$ is
any open connected component of $D$, we have three cases:
\begin{enumerate}
\item either $u>0$ on $\omega$,
\item or $u=0$ on $\omega$,
\item or $0<|\Omega_u\cap\omega|<|\omega|$, and $\partial\Omega_u$ has the same regularity as stated in Theorem \ref{theoreg}.
\end{enumerate}
If $|\Omega_u|<a$, then only the first two cases can appear.
\end{prop}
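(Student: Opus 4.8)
The plan is to reduce Proposition \ref{noncon} to the connected case (Theorem \ref{theoreg}) by working component by component. First I would establish existence of a locally Lipschitz solution $u$ of (\ref{probfonc}) in the non-connected setting: since $D$ is bounded and open with finitely many connected components $\omega_1,\dots,\omega_N$ (or the relevant ones containing mass), the Buttazzo--Dal Maso existence theorem still applies to give a quasi-open minimizer, and the local Lipschitz regularity of \cite{BHP}, together with (\ref{deltaumes}), is a purely local statement that does not use connectedness of $D$; hence Proposition \ref{lip} survives except for the identification $\Omega^*=\{u>0\}$. So the only genuine content is the trichotomy on each component and the regularity in the third case.

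Next I would argue the trichotomy. Fix an open connected component $\omega$ of $D$. The key observation is that $u$ restricted to $\omega$ is a solution of a \emph{local} version of (\ref{probfonc2}): for all $w\in H^1_0(D)$ that agree with $u$ outside $\omega$ and satisfy $|\Omega_w|\le a$, one has $J(u)\le J(w)$; in particular, setting $a_\omega:=|\Omega_u\cap\omega|$, the function $u|_\omega$ minimizes $\int_\omega|\nabla v|^2-\lambda_a\int_\omega v^2$ among $v\in H^1_0(\omega)$ with $|\Omega_v|\le a_\omega$. The point is that since $\omega$ is connected, the very argument used in Lemma \ref{euler} (``$\Omega_u=\omega$ or $\emptyset$ a.e. on $\omega$ if no admissible $\Phi$ exists'') applies, and more importantly Lemma \ref{mes} applies with this $\omega$: if $|\Omega_u\cap\omega|=|\omega|$ then $-\Delta u=\lambda_a u$ on $\omega$ and $u>0$ there (case 1); and if $|\Omega_u\cap\omega|=0$ then $u\equiv 0$ on $\omega$ (case 2). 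The remaining case is $0<|\Omega_u\cap\omega|<|\omega|$.

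In that last case I would show $u|_\omega$ satisfies exactly the hypotheses needed to rerun the arguments of Sections 2 and 3. The local minimality statement above is precisely (\ref{probfonc2}) with $D$ replaced by $\omega$, which is connected; pick any ball $B_R\subset\omega$ centered on $\partial\Omega_u\cap\omega$ (nonempty since $0<|\Omega_u\cap\omega|<|\omega|$ and $\omega$ is connected, so $\Omega_u\cap\omega$ is a nonempty proper subset). Then the construction of $\mu_\pm(h)$ and Theorem \ref{theopena} go through verbatim, giving a constant $\Lambda_\omega>0$; and then the proof of Theorem \ref{theoreg} (the estimates of Lemma \ref{lem}, the density bounds in (a), the Radon derivative computation in (b), and the $d=2$ argument in (c)) applies without change, all being local near $\partial\Omega_u\cap\omega$. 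This yields that $\partial\Omega_u$ has the stated regularity near $\omega$.

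Finally, for the last sentence: suppose $|\Omega_u|<a$. If some component $\omega$ were in case 3, then there is room to enlarge $\Omega_u$ inside $\omega$ (e.g.\ solve $-\Delta v=\lambda_a u$ on a small ball meeting $\{u=0\}\cap\omega$ as in Lemma \ref{mes}, or simply note the strict inequality $|\Omega_u|<a$ lets us take $\mu_-(h)$-type competitors with strictly larger support), strictly decreasing $J$ while keeping $|\Omega_w|\le a$, contradicting minimality — so case 3 cannot occur and every component is in case 1 or 2. The main obstacle, and the only place requiring care, is checking that the local minimality property genuinely transfers: one must verify that a competitor supported in a single component, with total measure $\le a$, is admissible in (\ref{probfonc2}), which is immediate since $u$ is unchanged on the other components and the measure constraint is global; once this is in hand everything else is a transcription of the connected case. \qed
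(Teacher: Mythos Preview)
Your proof is correct and follows essentially the same route as the paper: reduce to each connected component, invoke Lemma \ref{mes} for the trichotomy, and in case 3 observe that $u|_\omega$ solves (\ref{probfonc2}) on the connected domain $\omega$ with constraint $a_\omega=|\Omega_u\cap\omega|$, so Theorem \ref{theoreg} applies verbatim. For the final assertion when $|\Omega_u|<a$, the paper takes the slightly more direct path of testing $J(u)\le J(u+t\varphi)$ for all $\varphi\in C^\infty_0(D)$ with $|\Omega_\varphi|<a-|\Omega_u|$ to get $-\Delta u=\lambda_a u$ in $D$ and hence rule out case 3 by the strong maximum principle, which is cleaner than your enlarging-competitor argument but equivalent in substance.
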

\begin{remark} It follows from Proposition \ref{noncon} that we obtain the same regularity as in the connected case. Indeed, in the first two cases,
$\partial\Omega^*\cap\omega=\partial\Omega_u\cap\omega=\emptyset$.
\end{remark}
\begin{remark}
 To summarize, in all cases, there exists a solution $\Omega^*$ to (\ref{probforme}) which is regular in the sense of Theorem \ref{theoreg}, but there may be some other non regular optimal shape. And if $D$ is connected, any optimal shape is regular.
\end{remark}

\begin{proof} The existence and the Lipschitz regularity are stated in Proposition \ref{lip}.\\
If $u=0$ a.e. on $\omega$, then we get $u=0$ on $\omega$
by continuity.
\\If $u>0$ a.e. on $\omega$, by Lemma \ref{mes}, $u>0$ everywhere
in $\omega$.\\
If $0<|\Omega_u\cap\omega|<|\omega|$, the restriction of $u$ to
$\omega$ is of course solution of $(\ref{probfonc2})$ with $\omega$
instead of $D$ and $|\omega\cap\Omega_u|$ instead of $a$. We then may apply Theorem \ref{theoreg}.\\
Finally, if $|\Omega_u|<a$, we may write $J(u)\le J(u+t\varphi)$ for all $t\in(-\eps,\eps)$ and for
all $\varphi\in C^{\infty}_0(D)$ such that
$|\Omega_\varphi|<a-|\Omega_u|$ and so:
\[0=\left. \frac{{\rm d}J(u+t\varphi)}{{\rm d}t} \right|_{t=0}
=2\int_D (\nabla u.\nabla\varphi)-2\lambda_a \int_D u\varphi. \]
That is $-\Delta u=\lambda_a u$ in $D$ and the third
case is not possible since by maximum principle $u>0$ or $u=0$ on each connected component of $D$.\qed
\end{proof}

\end{section}

\bibliographystyle{plain}


\end{document}